\documentclass[english,11pt]{article}

\usepackage[english]{babel}
\usepackage[cp850]{inputenc}
\usepackage{latexsym,graphicx, fancybox}
\usepackage{amssymb, amsmath, amsfonts}
\usepackage{color}
\usepackage{latexsym}

\font\tenmath=msbm10 scaled 1200
\font\sevenmath=msbm7 scaled 1200
\font\fivemath=msbm5 scaled 1200 

\newfam\mathfam \textfont\mathfam=\tenmath
\scriptfont\mathfam=\sevenmath \scriptscriptfont\mathfam=\fivemath

\def\R{{\mathbb R}}
\def\N{{\mathbb N}}
\def\E{{\mathbb E}}

\def\P{{\mathbb P}}

\newtheorem{Theorem}{Theorem}[section]
\newtheorem{Definition}[Theorem]{Definition}
\newtheorem{Proposition}[Theorem]{Proposition}

\newtheorem{Lemma}[Theorem]{Lemma}
\newtheorem{Corollary}[Theorem]{Corollary}
\newtheorem{Remark}[Theorem]{Remark}

\numberwithin{equation}{section}

\def \^#1{\if#1i{\accent"5E\i}\else{\accent"5E#1}\fi}

\def \ind {1 \mkern -5mu \hbox{I}}

\begin{document}
\selectlanguage{english}
\title{\bf Coupling-convexity of a diffusion with respect to its starting position
}
 
\author{ 
 {\sc Benjamin Jourdain}\thanks{CERMICS, ENPC, CNRS, Institut Polytechnique de Paris, Inria,  Marne-la-Vall\'ee, France. E-mail: {\tt   benjamin.jourdain@enpc.fr}}~\thanks{This research
benefited from the support of the ``Chaire Risques Financiers'', Fondation du Risque.\\Anthropic's Claude Opus 5.5 has been used for proof checking and language improvement at a final stage of the preparation of this manuscript.}}

\maketitle
\begin{abstract}
   In the present paper, we consider a $d$-dimensional time-homogeneous stochastic differential equation with diffusion matrix $a$ and no drift coefficient. We check that the convexity in the starting position $x$ of the distribution of its solution on the path space when the codomain is endowed with the convex order is equivalent to the convexity of $\R^d\ni x\mapsto \sqrt{\zeta^*a(x)\zeta}$ for each $\zeta\in\R^d$. To do so, we introduce the seemingly stronger notion of coupling-convexity in the starting position and check that it is also equivalent to the convexity of the above square-rooted functions.
 \end{abstract}
 Studies on the propagation of convexity by stochastic dynamics and convex ordering for stochastic processes go back more than fourty years to a seminal work by Hajek~\cite{Hajek} (for increasing convex ordering). It was followed in the late 1990s by El Karoui, Jeanblanc and Shreve~\cite{EKJS},  Martini~\cite{Martini} and  then by various authors, among them Lions and Musiela~\cite{LM06}, Schied and Stadje~\cite{SS}, Bergenthum and R\"uschendorf in a series of papers~\cite{Ruschendorf1, Ruschendorf2,Ruschendorf3}  or, more recently, Pag\`es~\cite{Pag2016} (for various stochastic dynamics), Liu and Pag\`es~\cite{LiuPagAAP,LiuPag} (for McKean--Vlasov processes), Jourdain and Pag\`es~\cite{JourPagVolt22,JPMOR} (respectively for Volterra processes and one-dimensional stochastic differential equations). Except for Hajek~\cite{Hajek}, most of these papers were   motivated  by   financial applications to the pricing and hedging of derivative products when the underlying asset price evolves according to the dynamics under consideration.
 
The present paper is dedicated to rather simple dynamics: we consider a
$d$-dimensional time-homogeneous stochastic differential equation with no drift coefficient (the extension to a time-homogeneous affine drift coefficient is discussed in Remark \ref{remmt}). The diffusion matrix $a$ is a mapping from $\R^d$ to the set of symmetric positive semi-definite $d\times d$-matrices, which we suppose to be continuous with at most quadratic growth. We work under assumptions ensuring that the associated martingale problem is well-posed and, for $(t,x)\in\R_+\times\R^d$, denote respectively by $P^x_{\le t}$, $P^x_t$ and $P^x_{0,t}$  its solution on the time interval $[0,t]$ started from $x$ and the marginals of this solution at time $t$ and times $0,t$. The propagation of convexity means that for each convex function $\varphi:\R^d\to\R$, $\R^d\ni x\mapsto\int_{\R^d}\varphi(z)P^x_t(dz)$ is convex for each $t\ge 0$. It amounts to the convexity of $\R^d\ni x\mapsto P^x_t$, where the codomain is endowed with the convex order between probability measures. According to an argument based on the maximum principle for the associated Feynman--Kac partial differential equation \cite{LM06}, when there exists a Lipschitz function $\sigma:\R^d\to\R^{d\times q}$ with $q\in\N^*$ such that $a(x)=\sigma\sigma^*(x)$ for each $x\in\R^d$, the propagation of convexity is equivalent to the convexity of $\R^d\ni x\mapsto \sqrt{\zeta^*a\left(x+(\lambda-\zeta^*x)\zeta\right)\zeta}$ for each $(\zeta,\lambda)\in\R^d\times \R$ such that $|\zeta|=1$ (see Remark \ref{remcnlm} below for a simple proof of the necessary condition). When $d=1$, the latter condition and therefore the propagation of convexity always holds, which can be proved by a very nice probabilistic argument \cite{EKJS}. Still when $d=1$, according to \cite[Theorem 1 and Proposition 2]{JPMOR}, the convexity of $\R\ni x\mapsto P^x_{\le t}$ for each $t\ge 0$ is equivalent to the convexity of $\R\ni x\mapsto P^x_{0,t}$ for each $t\ge 0$ and to the convexity of $\R\ni x\mapsto \sqrt{a(x)}$. The main result in the present paper (see Theorem \ref{mainthm} below) is the following extension to any dimension $d$: the convexity of $\R^d\ni x\mapsto P^x_{\le t}$ for each $t\ge 0$ is equivalent to the convexity of $\R^d\ni x\mapsto P^x_{0,t}$ for each $t\ge 0$ and to the convexity of $\R^d\ni x\mapsto \sqrt{\zeta^*a(x)\zeta}$ for each $\zeta\in\R^d$. A key ingredient in the proof is the very nice recent result by Gu\'eant \cite{gueant} showing that the sufficient condition and the necessary condition given in \cite{JPJMVA} for the convex domination of a Gaussian distribution by a convex combination of two such distributions are equivalent. The reason why we rely on convex comparison of Gaussian mixtures is that we first deal with Euler discretizations of the stochastic differential equation before taking the limit as the time-step goes to $0$, a strategy that has proved successful in establishing propagation of convexity in \cite{BRFS,BRAP,PSP,LiuPag,LiuPagAAP,JourPagVolt22,JPMOR}. The result by Gu\'eant \cite{gueant} ensures that the convexity of $\R^d\ni x\mapsto \sqrt{\zeta^*a(x)\zeta}$ for each $\zeta\in\R^d$ implies that for each $(x,y,\alpha)\in\R^d\times\R^d\times[0,1]$, there exists 
$\Theta\in\R^{d\times d}$ such that
   \begin{equation*}
      \left(\begin{array}{cc} a(x) & \Theta \\ \Theta^* & a(y)\end{array}\right)\in{\cal S}_+(2d)\mbox{ and }a(\alpha x+(1-\alpha)y)\le \alpha^2 a(x)+\alpha(1-\alpha)(\Theta+\Theta^*)+(1-\alpha)^2a(y).\label{theta}
    \end{equation*}
This condition actually yields that ${\cal N}_{d}\left(\alpha x+(1-\alpha)y
  ,a(\alpha x+(1-\alpha)y)t\right)$ is smaller in the convex order than
the image of ${\cal N}_{2d}\left(\left(\begin{array}{c}x
   \\y
\end{array}\right),\left(\begin{array}{cc}a(x) & \Theta
   \\ \Theta^*& a(y)
                         \end{array}\right)t\right)$ by $\R^d\times\R^d\ni(\tilde x,\tilde y)\mapsto \alpha \tilde x+(1-\alpha)\tilde y\in\R^d$. Note that the $2d$-dimensional Gaussian distribution is a coupling between ${\cal N}_{d}\left(x
  ,a(x)t\right)$ and ${\cal N}_{d}\left(y
  ,a(y)t\right)$. This leads us to similarly define the existence of a convex order coupling between a probability measure and a convex combination of two such measures. This notion naturally extends into coupling-convexity of $\R^d\ni x\mapsto P^x_t$: for each $(x,y,\alpha)\in\R^d\times\R^d\times [0,1]$, there exists a coupling between $P^x_t$ and $P^y_t$, the image of which by $\R^d\times\R^d\ni (\tilde x,\tilde y)\mapsto \alpha\tilde x+(1-\alpha)\tilde y$ dominates $P^{\alpha x+(1-\alpha) y}_t$ in the convex order. The coupling-convexity of $\R^d\ni x\mapsto P^x_{\le t}$ is defined in a similar way. It turns out that the coupling-convexity of $\R^d\ni x\mapsto P^x_t$ is equivalent to the convexity of $\R^d\ni x\mapsto P^x_{0,t}$ (Proposition \ref{propeqcvccv}) and the coupling-convexity and the convexity of $\R^d\ni x\mapsto P^x_{\le t}$ are equivalent (Remark \ref{remeqcvccv}). To prove our main result, we check that the convexity of $\R^d\ni x\mapsto \sqrt{\zeta^*a(x)\zeta}$ for each $\zeta\in\R^d$ is equivalent to the coupling-convexity of $\R^d\ni x\mapsto P^x_{\le t}$.

Section \ref{secprm} is devoted to preliminaries: we recall the definition of the Wasserstein spaces of probability measures and the convex order between a probability measure and a convex combination of two such measures before introducing the stronger notion of existence of a convex order coupling and remarking that the two notions coincide for Gaussian distributions according to \cite{gueant}. In Section \ref{secmr}, we state our main result, namely the equivalence of the coupling-convexity of $x\mapsto P^x_{\le t}$ for each $t\ge 0$ with the convexity of $\R^d\ni x\mapsto \sqrt{\zeta^*a(x)\zeta}$ for each $\zeta\in\R^d$. We also state equivalence with other assertions which turns out to be useful in the proof. We deduce a sufficient condition for the convex comparison of the distributions of the solutions to two stochastic differential equations. The proofs are given in Section \ref{secproofs}.\par

\noindent {\sc Notation}
\begin{itemize}
\item Let ${\cal S}_+(d)$ denote the set of symmetric positive semi-definite $d\times d$-matrices. We denote by $M^{1/2}\in{\cal S}_+(d)$ the symmetric square root of $M\in{\cal S}_+(d)$ and, when $M$ is non-singular, by $M^{-1/2}$ the symmetric square root of its inverse $M^{-1}$.\item For $z\in\R^d$ and $M\in{\cal S}_+(d)$, we denote by ${\cal N}_d(z,M)$ the $d$-dimensional Gaussian distribution with expectation $z$ and covariance matrix $M$.\item For $M,N\in{\cal S}_+(d)$, we write $M\le N$ when $N-M\in{\cal S}_+(d)$.
\item Let ${\mathcal O}(d)$ denote the set of $d\times d$ orthogonal matrices.

\item We denote by $M^*$ the transpose of a matrix $M$.
\item For two probability measures $\mu$ and $\nu$ on Polish spaces, we denote by $\Pi(\mu,\nu)$ the set of couplings between $\mu$ and $\nu$ i.e. of probability measures on the product space with respective marginals $\mu$ and $\nu$. 
  \item We denote by ${\mathbb C}(Z)\in{\cal S}_+(d)$ the covariance matrix of a square-integrable $d$-dimensional random vector $Z$.
\end{itemize}

\section{Preliminaries on Wasserstein spaces, convex order and convex order couplings}\label{secprm}
\subsection{Wasserstein spaces}Let us recall the Wasserstein space  with index $\rho\in[1,+\infty)$ on a Polish space ${\cal Z}$ for the metric $d_{\cal Z}$:
$${\cal P}_\rho({\cal Z})=\left\{\mu\mbox{ probability measure on ${\cal Z}$ such that }\forall z_0\in{\cal Z},\;\int_{\cal Z}d_{\cal Z}(z_0,z)^\rho \mu(dz)<\infty\right\}.$$
Note that, in view of the triangle inequality, we could replace $\forall z_0\in{\cal Z}$ by $\exists z_0\in{\cal Z}$ in this definition and we now fix some $z_0\in{\cal Z}$. We endow ${\cal P}_\rho({\cal Z})$ with the Wasserstein metric
$${\cal W}_\rho(\mu,\nu)=\left(\inf_{\pi\in\Pi(\mu,\nu)}\int_{{\cal Z}\times{\cal Z}}d_{\cal Z}(z,\tilde z)^\rho\pi(dz,d\tilde z)\right)^{1/\rho}.$$
Of course, $\rho\mapsto {\cal W}_\rho$ is non-decreasing.
According to \cite[Theorem 6.9]{villani}, for a sequence $(\mu_n)_{n\in\N}$ of elements of ${\cal P}_\rho({\cal Z})$ and $\mu\in{\cal P}_\rho({\cal Z})$, we have, as $n\to\infty$,
\begin{align}
   {\cal W}_\rho(\mu_n,\mu)\rightarrow 0\Leftrightarrow \mu_n\rightarrow \mu \mbox{ weakly and }\int_{\cal Z}d_{\cal Z}(z_0,z)^\rho \mu_n(dz)\rightarrow \int_{\cal Z}d_{\cal Z}(z_0,z)^\rho \mu(dz).\label{cvwr}
\end{align}
The Wasserstein distance with index $1$ admits the following dual formulation:
\begin{equation}
   {\cal W}_1(\mu,\nu)=\sup_{\varphi:{\cal Z}\to\R\mbox{ $1$-Lipschitz}}\left\{\int_{\cal Z}\varphi(z)\mu(dz)-\int_{\cal Z}\varphi(z)\nu(dz)\right\}\label{duw1}.
 \end{equation}
\subsection{Convex order}
 \begin{Definition}

   \begin{itemize}
   \item For $\mu,\nu\in{\cal P}_1(\R^d)$, we say that $\mu$ is smaller than $\nu$ in the convex order and write $\mu\le_{\rm cx}\nu$ if
     $$\forall\varphi:\R^d\to\R\mbox{ convex},\;\int_{\R^d}\varphi(y)\mu(dy)\le\int_{\R^d}\varphi(y)\nu(dy).$$
   \item For $t\in(0,+\infty)$ and $P,Q\in{\cal P}_1(C([0,t],\R^d))$ where $C([0,t],\R^d)$ is endowed with the sup norm, we say that $P$ is smaller than $Q$ in the convex order and write $P\le_{\rm cx}Q$ if 
     $$\forall\phi:C([0,t],\R^d)\to\R\mbox{ l.s.c. convex},\;\int_{C([0,t],\R^d)}\phi(\xi)P(d\xi)\le\int_{C([0,t],\R^d)}\phi(\xi)Q(d\xi).$$
   \item When $X$ and $Y$ are integrable $\R^d$-valued random vectors or $C([0,t],\R^d)$-valued processes, we also denote $X\le_{\rm cx}Y$ when the distribution of $X$ is smaller than that of $Y$ in the convex order.
   \end{itemize}
 \end{Definition}
According to Strassen's theorem, for $\mu,\nu\in{\cal P}_1(\R^d)$, the convex domination $\mu\le_{\rm cx}\nu$ is equivalent to the existence of a martingale coupling $\pi\in\Pi(\mu,\nu)$ i.e. a coupling with disintegration $\pi(dx,dy)=\mu(dx)\pi_x(dy)$ such that $\mu(dx)$ a.e., $\int_{\R^d}y\pi_x(dy)=x$. The next lemma ensures that it is enough to restrict to Lipschitz convex test functions $\varphi$ or functionals $\phi$ to characterize the convex order, which can be proved by inf-convolution (see for instance \cite[Lemmas 2.9 and 2.11]{JPFS}). It also deals with consequences of the convex order between $\mu,\nu\in{\cal P}_2(\R^d)$ in terms of their covariance matrices.
 \begin{Lemma}\label{lemcvord}\begin{description}
    \item[(1)]For $\mu,\nu\in{\cal P}_1(\R^d)$, $$\mu\le_{\rm cx}\nu\Leftrightarrow \forall\varphi:\R^d\to\R\mbox{ Lipschitz convex},\;\int_{\R^d}\varphi(y)\mu(dy)\le\int_{\R^d}\varphi(y)\nu(dy).$$
    \item[(2)]When $\mu,\nu\in{\cal P}_2(\R^d)$ with respective covariance matrices ${\mathbb C}_\mu$ and ${\mathbb C}_\nu$ satisfy $\mu\le_{\rm cx}\nu$, then ${\mathbb C}_\mu\le {\mathbb C}_\nu$. Conversely, ${\mathbb C}_\mu\le {\mathbb C}_\nu\Rightarrow \forall y\in\R^d,\;{\cal N}_d(y,{\mathbb C}_\mu)\le_{\rm cx} {\cal N}_d(y,{\mathbb C}_\nu)$.
   \item[(3)]For $P,Q\in{\cal P}_1(C([0,t],\R^d))$, $$P\le_{\rm cx}Q\Leftrightarrow \forall\phi:C([0,t],\R^d)\to\R\mbox{ Lipschitz convex},\;\int_{C([0,t],\R^d)}\phi(\xi)P(d\xi)\le\int_{C([0,t],\R^d)}\phi(\xi)Q(d\xi).$$
  \end{description}
\end{Lemma}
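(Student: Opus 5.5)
The three items share one tool, inf-convolution (Moreau--Yosida regularisation), together with the positively homogeneous functions $\varphi(y)=(\zeta^*(y-m))^2$ for item (2). The plan is to treat (1) and (3) by one argument on a normed space $E$, with $E=\R^d$ or $E=C([0,t],\R^d)$, and to handle (2) by direct computation.

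For the nontrivial direction of (1) and (3), fix a convex function $\phi:E\to\R$ that is lower semicontinuous. In the finite-dimensional case $\phi$ is automatically continuous. We may assume $\int\phi\,dQ<\infty$, since otherwise there is nothing to prove. Set
$$\phi_n(\xi)=\inf_{\eta\in E}\{\phi(\eta)+n\|\xi-\eta\|\}.$$
The following facts are standard.
\begin{itemize}
\item $\phi_n$ is convex, because it is the inf-convolution of two convex functions.
\item $\phi_n$ is $n$-Lipschitz.
\item $\phi_n\le\phi$.
\item $\phi_n\uparrow\phi$ pointwise as $n\to\infty$, by lower semicontinuity.
\end{itemize}
The only delicate point is that $\phi_n$ must be finite. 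A proper l.s.c.\ convex function on a normed space admits a continuous affine minorant $\ell$, by Hahn--Banach separation of the closed epigraph. Hence $\phi_n\ge\ell_n$, where $\ell_n$ is affine and finite once $n\ge\|\ell\|$.

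Apply the hypothesis to the Lipschitz convex functions $\phi_n$, for $n$ large. This gives $\int\phi_n\,dP\le\int\phi_n\,dQ\le\int\phi\,dQ$. Since $\phi_n\ge\ell_{n_0}$ for $n\ge n_0$, and $\ell_{n_0}$ is $P$-integrable because $P\in{\cal P}_1$, monotone convergence yields $\int\phi\,dP\le\int\phi\,dQ$. The converse directions are trivial. I expect the main obstacle to be the existence of the affine minorant, so that the regularisation is real-valued. This is exactly where lower semicontinuity is used in (3).

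For the first part of (2), take $\zeta\in\R^d$ and the convex function $y\mapsto(\zeta^*y)^2$, which is integrable under $\mu,\nu\in{\cal P}_2$. Testing with $y\mapsto\pm\zeta^*y$ shows that $\mu$ and $\nu$ have the same mean $m$. Testing with $(\zeta^*(y-m))^2$ then gives $\zeta^*{\mathbb C}_\mu\zeta\le\zeta^*{\mathbb C}_\nu\zeta$, that is, ${\mathbb C}_\mu\le{\mathbb C}_\nu$.

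For the converse, write ${\mathbb C}_\nu={\mathbb C}_\mu+D$ with $D\in{\cal S}_+(d)$. Let $G\sim{\cal N}_d(0,{\mathbb C}_\mu)$ and $H\sim{\cal N}_d(0,D)$ be independent. Then $y+G+H\sim{\cal N}_d(y,{\mathbb C}_\nu)$ and $E[y+G+H\mid G]=y+G$. Jensen's conditional inequality therefore gives $E\varphi(y+G)\le E\varphi(y+G+H)$ for every convex $\varphi$, which is the required convex ordering ${\cal N}_d(y,{\mathbb C}_\mu)\le_{\rm cx}{\cal N}_d(y,{\mathbb C}_\nu)$. Equivalently, this is a martingale coupling in the sense of Strassen's theorem.
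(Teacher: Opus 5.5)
Your proposal is correct and follows the paper's approach: (1) and (3) by inf-convolution, which the paper only invokes by reference and whose details you supply correctly (finiteness of $\phi_n$ via a continuous affine minorant, then monotone convergence), and (2) exactly as in the paper, via the quadratic test functions $(\zeta^*(y-m))^2$ after matching means and via an independent ${\cal N}_d(0,{\mathbb C}_\nu-{\mathbb C}_\mu)$ increment with conditional Jensen. One small precision: the pointwise convergence $\phi_n\uparrow\phi$ needs the affine minorant as well as lower semicontinuity, since the minorant is what forces near-minimisers $\eta_n$ of $\phi(\eta)+n\|\xi-\eta\|$ to converge to $\xi$.
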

To prove {\bf (2)}, we remark that for $\mu,\nu\in{\cal P}_1(\R^d)$, $\mu\le_{\rm cx}\nu\Rightarrow \int_{\R^d}y\mu(dy)=\int_{\R^d}y\nu(dy)$. When moreover $\mu,\nu\in{\cal P}_2(\R^d)$, then the choice $\varphi_\xi(x)=(\xi^*(x-\int_{\R^d}y\mu(dy)))^2$ where $\xi\in\R^d$ implies that $\xi^*{\mathbb C}_\mu\xi=\int_{\R^d}\varphi_\xi(x)\mu(dx)\le \int_{\R^d}\varphi_\xi(y)\nu(dy)=\xi^*{\mathbb C}_\nu\xi$ so that ${\mathbb C}_\mu\le {\mathbb C}_\nu$. When conversely ${\mathbb C}_\mu\le {\mathbb C}_\nu$, then for $X\sim{\cal N}_d(y,{\mathbb C}_\mu)$ and $Z\sim{\cal N}_d(0,{\mathbb C}_\nu-{\mathbb C}_\mu)$ independent, $Y=X+Z\sim{\cal N}_d(y,{\mathbb C}_\nu)$ and by Jensen's inequality and the tower property, for $\varphi:\R^d\to\R$ convex, $\E[\varphi(X)]=\E[\varphi(\E[Y|X])]\le \E[\E[\varphi(Y)|X]]=\E[\varphi(Y)]$.
\subsection{Existence of convex order couplings}
\begin{Definition}
   For $\eta,\mu,\nu\in{\cal P}_1(\R^d)$ (resp. ${\cal P}_1(C([0,t],\R^d))$) and $\alpha\in[0,1]$, we say that ${\rm COC}(\eta,\mu,\nu,\alpha)$ holds if there exists $\pi\in{\Pi}(\mu,\nu)$ with image $\tilde\pi^\alpha$ by (resp. $C([0,t],\R^d)\times C([0,t],\R^d)$) $\R^d\times \R^d\ni(x,y)\mapsto \alpha x+(1-\alpha)y$ such that $\eta\le_{\rm cx}\tilde\pi^\alpha$.
\end{Definition}
Since for $\varphi$ convex, $\varphi(\alpha x+(1-\alpha)y)\le \alpha\varphi(x)+(1-\alpha)\varphi(y)$, we have \begin{equation}
   {\rm COC}(\eta,\mu,\nu,\alpha)\Rightarrow \eta\le_{\rm cx}\alpha\mu+(1-\alpha)\nu.\label{COCimpC}
 \end{equation}When $\alpha\in\{0,1\}$, $\eta\le_{\rm cx}\alpha\mu+(1-\alpha)\nu$ implies $\eta\le_{\rm cx}\tilde\pi^\alpha$ for each $\pi\in\Pi(\mu,\nu)$, so that the two sides of the implication \eqref{COCimpC} are equivalent. 
 Before investigating the gap between them in the case $\alpha\in(0,1)$ and  $\eta,\mu,\nu\in{\cal P}_1(\R^d)$, let us check that the existence of a convex order coupling is equivalent to a stronger property than mere convex ordering.
 \begin{Lemma}\label{lemcoc2marg}
   For $\eta,\mu,\nu\in{\cal P}_1(\R^d)$ and
   $\alpha\in[0,1]$, we have 
 \begin{align*}
   {\rm COC}(\eta,\mu,\nu,\alpha)\Leftrightarrow &\forall q\in\N^*,\;\forall x,y\in\R^q,\;\delta_{\alpha x+(1-\alpha)y}\otimes \eta\le_{\rm cx}\alpha\delta_{x}\otimes \mu+(1-\alpha)\delta_{y}\otimes\nu\\
   \Leftrightarrow&\exists q\in\N^*,\;\exists x,y\in\R^q\mbox{ distinct},\;\delta_{\alpha x+(1-\alpha)y}\otimes \eta\le_{\rm cx}\alpha\delta_{x}\otimes \mu+(1-\alpha)\delta_{y}\otimes\nu.
 \end{align*}
 \end{Lemma}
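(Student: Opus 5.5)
I would prove the cycle of implications: COC implies the statement for all $q,x,y$; that trivially implies the statement for some distinct $x,y$; and the latter implies COC. The middle implication needs only the existence of distinct points in $\R^q$, for instance $q=1$, $x=0$, $y=1$.

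\emph{First implication.} Let $\pi\in\Pi(\mu,\nu)$ be such that $\eta\le_{\rm cx}\tilde\pi^\alpha$. By Strassen's theorem there is a martingale coupling between $\eta$ and $\tilde\pi^\alpha$. Gluing it with $\pi$, we obtain a random vector $(Z,X,Y)$ with $Z\sim\eta$, $(X,Y)\sim\pi$ and $\E[\alpha X+(1-\alpha)Y|Z]=Z$. Now take $q\in\N^*$ and $x,y\in\R^q$, and let $B$ be a Bernoulli$(\alpha)$ variable independent of $(Z,X,Y)$. Set
$$W=B(x,X)+(1-B)(y,Y)\in\R^q\times\R^d,$$
whose law is $\alpha\delta_x\otimes\mu+(1-\alpha)\delta_y\otimes\nu$. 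Then
$$\E[W|Z]=\bigl(\alpha x+(1-\alpha)y,\;\alpha\E[X|Z]+(1-\alpha)\E[Y|Z]\bigr)=(\alpha x+(1-\alpha)y,Z).$$
The conditional Jensen inequality then gives $\delta_{\alpha x+(1-\alpha)y}\otimes\eta\le_{\rm cx}$ the law of $W$. Integrability is clear since $\eta,\mu,\nu\in{\cal P}_1(\R^d)$.

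\emph{Last implication.} This is the main step. Fix distinct $x,y\in\R^q$ and write $z=\alpha x+(1-\alpha)y$. The cases $\alpha\in\{0,1\}$ are handled by the remark after \eqref{COCimpC}, since projecting the convex order inequality onto the second component gives $\eta\le_{\rm cx}\alpha\mu+(1-\alpha)\nu$. So assume $\alpha\in(0,1)$.

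By Strassen's theorem there is a martingale coupling $(V,W)$ with $V=(z,Z)$, $Z\sim\eta$, and $W$ distributed as $\alpha\delta_x\otimes\mu+(1-\alpha)\delta_y\otimes\nu$. Write $W=(W_1,W_2)$, where $W_1\in\{x,y\}$. Set $B=\ind_{\{W_1=x\}}$, so $\P(B=1)=\alpha$. Conditionally on $B=1$, $W_2\sim\mu$; conditionally on $B=0$, $W_2\sim\nu$. The martingale property in the first component gives $\E[W_1|Z]=z$. Since $W_1=By+B(x-y)+(1-B)y$ and $x\neq y$, this reads $\P(B=1|Z)(x-y)+y=z$, hence $\P(B=1|Z)=\alpha$ almost surely. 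This is where distinctness is essential: $B$ is independent of $Z$.

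The second component gives $Z=\E[W_2|Z]=\E[BW_2+(1-B)W_2|Z]$. Now build the coupling. Conditionally on $Z$, draw $X$ from the conditional law of $W_2$ given $(Z,B=1)$ and $Y$ from the conditional law of $W_2$ given $(Z,B=0)$, for instance conditionally independently. Since $B$ is independent of $Z$, the unconditional law of $X$ is the law of $W_2$ given $B=1$, which is $\mu$; likewise $Y\sim\nu$. Then
$$\E[\alpha X+(1-\alpha)Y|Z]=\alpha\E[W_2|Z,B=1]+(1-\alpha)\E[W_2|Z,B=0]=\E[W_2|Z]=Z,$$
using $\P(B=1|Z)=\alpha$ again. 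Hence $\eta\le_{\rm cx}$ the law of $\alpha X+(1-\alpha)Y$, and the law of $(X,Y)$ is the required $\pi\in\Pi(\mu,\nu)$.

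The delicate points are the measure-theoretic construction of the conditional kernels, handled by disintegration on Polish spaces, and checking the marginal identities. The key insight is that the convex order in the $\R^q$ component forces $B$ to be independent of $Z$.
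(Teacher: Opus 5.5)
Your proof is correct. The key step, from the third assertion to ${\rm COC}$, is the paper's argument in probabilistic language. The paper disintegrates the Strassen martingale coupling as $\delta_{\alpha x+(1-\alpha)y}(dz)\eta(dw)\gamma_w(d\tilde z,d\tilde w)$ and uses the first component together with $x\neq y$ to get $\gamma_w(\{x\}\times\R^d)=\alpha$, which is your independence of $B$ and $Z$. It then takes $\pi(d\tilde x,d\tilde y)=\int\frac{1}{\alpha(1-\alpha)}\gamma_w(\{x\},d\tilde x)\gamma_w(\{y\},d\tilde y)\eta(dw)$, which is exactly your conditionally independent draw of $X$ and $Y$ from the laws of $W_2$ given $(Z,B=1)$ and $(Z,B=0)$, and concludes by Jensen.

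The only genuine difference is in the first implication. You construct an explicit martingale coupling using Strassen's theorem, a gluing/disintegration step and an independent Bernoulli label. The paper needs none of this: for convex $\varphi$ on $\R^q\times\R^d$ it chains $\int\varphi(\alpha x+(1-\alpha)y,w)\eta(dw)\le\int\varphi(\alpha x+(1-\alpha)y,\alpha\tilde x+(1-\alpha)\tilde y)\pi(d\tilde x,d\tilde y)$, which uses convexity in the second variable and $\eta\le_{\rm cx}\tilde\pi^\alpha$, with the joint-convexity bound $\varphi(\alpha x+(1-\alpha)y,\alpha\tilde x+(1-\alpha)\tilde y)\le\alpha\varphi(x,\tilde x)+(1-\alpha)\varphi(y,\tilde y)$. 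Your route is heavier but exhibits the martingale coupling explicitly; the paper's is two lines and purely analytic.

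You also treat the case $\alpha\in\{0,1\}$ explicitly. The paper leaves it implicit in the remark following \eqref{COCimpC}.
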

 \noindent {\bf Proof of Lemma \ref{lemcoc2marg}.}
We are going to prove that the first assertion implies the second and that the third, which is an obvious consequence of the second, implies the first. Let us assume  ${\rm COC}(\eta,\mu,\nu,\alpha)$ and let $\pi\in\Pi(\mu,\nu)$ be such that $\eta\le_{\rm cx}\tilde\pi^\alpha$, $q\in\N^*$, $x,y\in\R^q$ and $\varphi:\R^q\times\R^d\to\R$ be convex. We have\begin{align*}
   \int_{\R^q\times\R^d}\varphi(z,w)\delta_{\alpha x+(1-\alpha)y}&\otimes \eta(dz,dw)=\int_{\R^d}\varphi(\alpha x+(1-\alpha)y,w)\eta(dw)\\&\le 
 \int_{\R^d\times\R^d}\varphi(\alpha x+(1-\alpha)y,\alpha \tilde x+(1-\alpha)\tilde y)
   \pi(d\tilde x,d\tilde y) \\&\le\int_{\R^d\times\R^d}\left(\alpha\varphi( x,\tilde x)+(1-\alpha)\varphi(y,\tilde y)\right)\pi(d\tilde x,d\tilde y)\\&=\alpha\int_{\R^d}\varphi( x,\tilde x)\mu(d\tilde x)+(1-\alpha)\int_{\R^d}\varphi( y,\tilde y)\nu(d\tilde y)\\&=\int_{\R^q\times\R^d}\varphi(z,w)\left(\alpha\delta_{x}\otimes\mu+(1-\alpha)\delta_{y}\otimes\nu\right)(dz,dw).
                                                                                                                                                                                                  \end{align*}
                                                                                                                                                                                                  Hence $\delta_{\alpha x+(1-\alpha)y}\otimes \eta\le_{\rm cx}\alpha\delta_{x}\otimes \mu+(1-\alpha)\delta_{y}\otimes\nu$.\par
                                                                                                                                                                                                  Let us now assume that this inequality holds with $x\neq y$ and $\alpha\in (0,1)$. Then, according to Strassen's theorem, there exists $\gamma\in\Pi(\delta_{\alpha x+(1-\alpha)y}\otimes \eta,\alpha\delta_{x}\otimes \mu+(1-\alpha)\delta_{y}\otimes\nu)$ which is a martingale coupling. Writing the disintegration $\gamma(dz,dw,d\tilde z,d\tilde w)=\delta_{\alpha x+(1-\alpha)y}(dz)\eta(dw)\gamma_w(d\tilde z,d\tilde w)$, the martingale property amounts to $\eta(dw)$ a.e., $\int_{\R^q\times\R^d} (\tilde z,\tilde w) \gamma_w(d\tilde z,d\tilde w)=(\alpha x+(1-\alpha)y,w)$ and implies that \begin{equation}
                                                                                                                                                                                                    \eta(dw)\mbox{ a.e.},\;\gamma_w(\{x\}\times\R^d)=\alpha=1-\gamma_w(\{y\}\times\R^d)\mbox{ and }\int_{\R^d}\tilde w\gamma_w(\R^q,d\tilde w)=w.\label{carcmart}\end{equation} Moreover, the fact that $\gamma\in\Pi(\delta_{\alpha x+(1-\alpha)y}\otimes \eta,\alpha\delta_{x}\otimes \mu+(1-\alpha)\delta_{y}\otimes\nu)$ with $x\ne y$ implies that $\int_{w\in\R^d}\gamma_w(\{x\},d\tilde x)\eta(dw)=\alpha\mu(d\tilde x)$ and $\int_{w\in\R^d}\gamma_w(\{y\},d\tilde x)\eta(dw)=(1-\alpha)\nu(d\tilde y)$. Also using the first two equalities in \eqref{carcmart}, we deduce that $\pi(d\tilde x,d\tilde y)=\int_{w\in\R^d}\frac{1}{\alpha(1-\alpha)}\gamma_w(\{x\},d\tilde x)\gamma_w(\{y\},d\tilde y)\eta(dw)$ belongs to $\Pi(\mu,\nu)$. For $\varphi:\R^d\to\R$ convex, Jensen's inequality implies that\begin{align*}\int_{\R^d\times\R^d}\varphi(\alpha \tilde x+(1-\alpha)\tilde y)\pi(d\tilde x,d\tilde y)&\ge \int_{\R^d}\varphi\left(\int_{\R^d\times\R^d}\Big(\frac{\tilde x}{1-\alpha}+\frac{\tilde y}{\alpha}\Big)\gamma_w(\{x\},d\tilde x)\gamma_w(\{y\},d\tilde y)\right)\eta(dw)\end{align*}
                                                                                                                                                                                                  Using the first two equalities in \eqref{carcmart} then the last equality, we get that $\eta(dw)$ a.e., 
\begin{align*}\int_{\R^d\times\R^d}\Big(\frac{\tilde x}{1-\alpha}+\frac{\tilde y}{\alpha}\Big)\gamma_w(\{x\},d\tilde x)\gamma_w(\{y\},d\tilde y)&=\int_{\R^d}\tilde x
   \gamma_w(\{x\},d\tilde x)+\int_{\R^d}\tilde y
   \gamma_w(\{y\},d\tilde y)\\&=\int_{\R^d}\tilde w\gamma_w(\R^q,d\tilde w)=w.
\end{align*}
Plugging this equality in the last inequality, we conclude that $\eta\le_{\rm cx}\tilde\pi^\alpha$ so that ${\rm COC}(\eta,\mu,\nu,\alpha)$ holds.                             \hfill$\Box$

                                                                                                                                                The next lemma also illustrates that the existence of a convex order coupling is stronger than mere convex ordering.\begin{Lemma}
   Let $\mu,\nu\in{\cal P}_1(\R^d)$ and $\alpha\in (0,1)$. Then ${\rm COC}(\alpha\mu+(1-\alpha)\nu,\mu,\nu,\alpha)\Rightarrow \mu=\nu.$
 \end{Lemma}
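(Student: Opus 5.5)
We work with the coupling $\pi\in\Pi(\mu,\nu)$ given by ${\rm COC}(\alpha\mu+(1-\alpha)\nu,\mu,\nu,\alpha)$, so that with $(X,Y)\sim\pi$ and $Z=\alpha X+(1-\alpha)Y\sim\tilde\pi^\alpha$ we have $\alpha\mu+(1-\alpha)\nu\le_{\rm cx}\tilde\pi^\alpha$. Combined with \eqref{COCimpC}, which gives the reverse inequality $\tilde\pi^\alpha\le_{\rm cx}\alpha\mu+(1-\alpha)\nu$, this yields, for every convex $\varphi$ (Lipschitz, say, by Lemma \ref{lemcvord} (1), so that all integrals are finite),
$$\E[\varphi(\alpha X+(1-\alpha)Y)]=\alpha\E[\varphi(X)]+(1-\alpha)\E[\varphi(Y)].$$
Since $\varphi(\alpha X+(1-\alpha)Y)\le\alpha\varphi(X)+(1-\alpha)\varphi(Y)$ pointwise, equality of expectations forces equality a.s.

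The key step is to pick a single strictly convex Lipschitz function, for instance $\varphi(z)=\sqrt{1+|z|^2}$, which is $1$-Lipschitz and strictly convex on $\R^d$. Since $\alpha\in(0,1)$, the a.s. equality $\varphi(\alpha X+(1-\alpha)Y)=\alpha\varphi(X)+(1-\alpha)\varphi(Y)$ for a strictly convex $\varphi$ forces $X=Y$ a.s. Hence $\pi$ is concentrated on the diagonal, and its two marginals coincide: $\mu=\nu$.

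The only step needing care is checking strict convexity of $z\mapsto\sqrt{1+|z|^2}$. One way is to note that it is the Euclidean norm of $(1,z)\in\R^{d+1}$, and equality in the triangle inequality for $\alpha(1,x)+(1-\alpha)(1,y)$ requires $(1,x)$ and $(1,y)$ to be positively collinear, hence $x=y$. Alternatively, one can use that its Hessian $(1+|z|^2)^{-3/2}\big((1+|z|^2)I_d-zz^*\big)$ is positive definite. Integrability is not an issue since $\mu,\nu\in{\cal P}_1(\R^d)$ and $\varphi$ has linear growth.
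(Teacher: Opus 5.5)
Your proof is correct and is essentially the paper's argument: the paper also writes the difference of expectations as the $\pi$-integral of the nonpositive function $\varphi(\alpha x+(1-\alpha)y)-\alpha\varphi(x)-(1-\alpha)\varphi(y)$, and uses the same strictly convex $\varphi(z)=\sqrt{1+|z|^2}$ to force $x=y$ $\pi$-a.e. One small quibble: the inequality $\tilde\pi^\alpha\le_{\rm cx}\alpha\mu+(1-\alpha)\nu$ does not come from \eqref{COCimpC}, which is vacuous here since $\eta=\alpha\mu+(1-\alpha)\nu$; it comes directly from the pointwise convexity inequality you state next, and that inequality holds for any coupling.
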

 \noindent {\bf Proof.} Let us assume ${\rm COC}(\alpha\mu+(1-\alpha)\nu,\mu,\nu,\alpha)$ with $\alpha\in(0,1)$ and let $\pi\in\Pi(\mu,\nu)$ be such that $\alpha\mu+(1-\alpha)\nu\le_{\rm cx}\tilde\pi^\alpha$. For $\varphi:\R^d\to \R$ convex with affine growth, we have
\begin{align*}
   0&\le \int_{\R^d}\varphi(z)\tilde\pi^\alpha(dz)-\int_{\R^d}\varphi(z)(\alpha\mu+(1-\alpha)\nu)(dz)\\&=\int_{\R^d\times\R^d}\left(\varphi(\alpha x+(1-\alpha)y)-(\alpha \varphi(x)+(1-\alpha)\varphi(y))\right)\pi(dx,dy).
\end{align*}
The integrand in the last integral is non-positive and even negative when $\varphi$ is strictly convex and $x\ne y$. Therefore, choosing the strictly convex function $\varphi(z)=\sqrt{1+|z|^2}$, we conclude that $\pi(dx,dy)$ a.e., $x=y$ so that $\mu=\nu$.
 \hfill$\Box$
                                                                                                                                                
Let us investigate the difference between the two sides of \eqref{COCimpC} in the case $\eta,\mu,\nu\in{\cal P}_1(\R)$ where the convex order can be characterized in terms of antiderivatives of quantile functions. For $\gamma\in{\cal P}(\R)$, we denote by $F_\gamma^{-1}(u)=\inf\{x\in\R:\gamma((-\infty,x])\ge u\},\; u\in (0,1)$ its quantile function. For $\eta,\gamma\in{\cal P}_1(\R)$, according to \cite[Theorem 3.A.5]{ShaShan}, we have
\begin{align}\label{carcvod1}
   \eta\le_{\rm cx}\gamma\Leftrightarrow \forall u\in[0,1],\;\int_0^uF_\eta^{-1}(v)dv\ge \int_0^uF_\gamma^{-1}(v)dv \mbox{ with equality when }u=1.
\end{align}
\begin{Lemma}\label{lemcocc1d}  Let $\eta,\mu,\nu\in{\cal P}_1(\R)$ and $\alpha\in (0,1)$. Then
\begin{align}
  {\rm COC}(\eta,\mu,\nu,\alpha)&\Leftrightarrow \eta\le_{\rm cx}\tilde\pi^\alpha_c\mbox{ where }\pi_c\mbox{ is the comonotonic coupling between $\mu$ and $\nu$}\notag\\&\Leftrightarrow \forall u\in[0,1],\;\int_0^uF_\eta^{-1}(v)dv\ge \int_0^u(\alpha F_\mu^{-1}(v)+(1-\alpha)F_\nu^{-1}(v))dv\notag\\&\phantom{\Leftrightarrow \forall u\in[0,1],\;}\mbox{ with equality when }u=1,\label{caracoc}\end{align}while\begin{align}
 \eta\le_{\rm cx}\alpha\mu+&(1-\alpha)\nu\notag\Leftrightarrow \forall u\in[0,1],\\&\int_0^uF_\eta^{-1}(v)dv\ge\inf_{w\in[0\vee (u+\alpha-1),u\wedge\alpha ]}\left(\alpha\int_0^{w/\alpha}F_\mu^{-1}(v)dv+(1-\alpha)\int_0^{(u-w)/(1-\alpha)}F_\nu^{-1}(v)dv\right)\notag\\&\mbox{ with equality when }u=1.\label{carac}
\end{align}
\end{Lemma}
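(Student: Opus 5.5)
The plan is to prove the three parts in turn: the comonotonic reduction, the quantile version of \eqref{caracoc}, and the mixture version \eqref{carac}.

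\textbf{Reduction to the comonotonic coupling.} The direction ``$\eta\le_{\rm cx}\tilde\pi^\alpha_c\Rightarrow{\rm COC}(\eta,\mu,\nu,\alpha)$'' is trivial, taking $\pi=\pi_c$. For the converse, fix $\pi\in\Pi(\mu,\nu)$ with $\eta\le_{\rm cx}\tilde\pi^\alpha$. I will show that $\tilde\pi^\alpha\le_{\rm cx}\tilde\pi^\alpha_c$ and conclude by transitivity. Take $U$ uniform on $(0,1)$. Then $\tilde\pi^\alpha_c$ is the law of $\alpha F_\mu^{-1}(U)+(1-\alpha)F_\nu^{-1}(U)$. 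This map is non-decreasing in $U$, hence it is the quantile function of $\tilde\pi^\alpha_c$: $F^{-1}_{\tilde\pi^\alpha_c}=\alpha F_\mu^{-1}+(1-\alpha)F_\nu^{-1}$ a.e. Both $\tilde\pi^\alpha$ and $\tilde\pi^\alpha_c$ have mean $\alpha\int x\mu(dx)+(1-\alpha)\int y\nu(dy)$. By \eqref{carcvod1}, it therefore suffices to check that for each $u\in[0,1]$,
$$\int_0^uF^{-1}_{\tilde\pi^\alpha}(v)dv\ge\int_0^uF^{-1}_{\tilde\pi^\alpha_c}(v)dv.$$
For this I use the variational formula $\int_0^uF_\gamma^{-1}(v)dv=\inf\{\E[Z\,h]: Z\sim\gamma,\ 0\le h\le 1,\ \E[h]=u\}$, where the infimum runs over random variables $h$ defined jointly with $Z$ (on a possibly enlarged probability space). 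Given $(X,Y)\sim\pi$ and such an $h$ with $\E[h]=u$, we have
$$\E[(\alpha X+(1-\alpha)Y)h]=\alpha\E[Xh]+(1-\alpha)\E[Yh]\ge\alpha\int_0^uF_\mu^{-1}+(1-\alpha)\int_0^uF_\nu^{-1}.$$
Taking the infimum over $h$ gives the claim, i.e. subadditivity of $\gamma\mapsto\int_0^uF^{-1}_\gamma$ along couplings. Combining with \eqref{carcvod1} applied to $\eta$ and $\tilde\pi^\alpha_c$ then yields \eqref{caracoc} directly.

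\textbf{Mixture characterization \eqref{carac}.} Here I again apply \eqref{carcvod1}, now with $\gamma=\alpha\mu+(1-\alpha)\nu$, so it suffices to compute $\int_0^uF_\gamma^{-1}$. Using the same variational formula, realize $Z\sim\gamma$ as $Z=X$ on an event $B$ with $\P(B)=\alpha$ and $Z=Y$ on $B^c$, with $X\sim\mu$ and $Y\sim\nu$. Any admissible $h$ splits its mass as $w=\E[h1_B]\in[0\vee(u+\alpha-1),u\wedge\alpha]$ and $u-w$ on $B^c$. Conditionally on $B$, the minimal value is $\alpha\int_0^{w/\alpha}F_\mu^{-1}$, and symmetrically $(1-\alpha)\int_0^{(u-w)/(1-\alpha)}F_\nu^{-1}$ on $B^c$. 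Optimizing over $w$ gives exactly the infimum in \eqref{carac}, and the equality at $u=1$ is just equality of means.

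\textbf{Main obstacle.} The main technical point is establishing the variational formula for $\int_0^uF_\gamma^{-1}$. The lower bound is a bathtub/Hardy–Littlewood rearrangement argument: $\E[Zh]\ge\int_0^uF_\gamma^{-1}$ whenever $0\le h\le1$ and $\E[h]=u$. Attainment uses $h=1_{\{V<u\}}$ where $Z=F_\gamma^{-1}(V)$, which may require enlarging the probability space to handle atoms. Everything else in the argument is bookkeeping.
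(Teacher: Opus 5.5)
Your proposal is correct and follows essentially the same route as the paper. Both rest on three ingredients: the variational formula $\int_0^uF_\gamma^{-1}(v)dv=\inf\{\int_{\R} x\,p(x)\gamma(dx):\ 0\le p\le 1,\ \int_{\R} p\,d\gamma=u\}$ (the paper cites Rachev--R\"uschendorf where you use a bathtub argument), subadditivity of $\gamma\mapsto\int_0^uF_\gamma^{-1}$ along couplings obtained by conditioning on each coordinate, and a split of the test function's mass into $w$ on $\mu$ and $u-w$ on $\nu$ for the mixture. The only differences are cosmetic: you allow randomized $h$ and realize the mixture through an independent event $B$, while the paper uses deterministic functions $p$ and glues via the density $f$ of $\alpha\mu$ with respect to $\alpha\mu+(1-\alpha)\nu$.
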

\begin{Remark}\begin{itemize}
  \item 
    For the comonotonic coupling $\pi_c$ which is the image of the Lebesgue measure on $(0,1)$ by $(0,1)\ni v\mapsto(F_\mu^{-1}(v),F_\nu^{-1}(v))$, we have that \begin{equation}
      \forall u\in[0,1],\;\int_0^uF_{\tilde\pi^\alpha_c}^{-1}(v)dv=\int_0^u(\alpha F_\mu^{-1}(v)+(1-\alpha)F_\nu^{-1}(v))dv.\label{antquantcomo}
    \end{equation}
  \item When $u=1$, $[0\vee (u+\alpha-1),u\wedge\alpha ]=\{\alpha\}$ so that the right-hand side of the inequality in \eqref{carac} is equal to $\int_0^1(\alpha F_\mu^{-1}(v)+(1-\alpha)F_\nu^{-1}(v))dv$.
  \item For each $u\in[0,1]$, $\alpha u\in[0\vee (u+\alpha-1),u\wedge\alpha ]$ so that the right-hand side of the inequality in \eqref{carac} is not greater than that in \eqref{caracoc} and we recover \eqref{COCimpC}.
\end{itemize}
    
\end{Remark}
\noindent {\bf Proof of Lemma \ref{lemcocc1d}.}

In view of \eqref{carcvod1} and \eqref{antquantcomo}, to prove \eqref{caracoc} it is enough to check that
\begin{equation}
   \forall \pi\in\Pi(\mu,\nu),\;\forall u\in[0,1],\;\int_0^uF_{\tilde\pi^\alpha}^{-1}(v)dv\ge \int_0^u(\alpha F_\mu^{-1}(v)+(1-\alpha)F_\nu^{-1}(v))dv,\label{minobarpi}
\end{equation}
which we now do.

For $u\in[0,1]$, let ${\cal B}(u)=(1-u)\delta_0+u\delta_1$ denote the Bernoulli distribution with parameter $u$. According to \cite[Theorem 3.1.2]{raru}, we have
\begin{equation*}
   \forall \gamma\in{\cal P}_1(\R),\;\forall u\in[0,1],\;\int_0^{u}F_\gamma^{-1}(v)dv=\inf_{\beta\in\Pi(\gamma,{\cal B}(u))}\int_{\R^2}xy\beta(dx,dy),
 \end{equation*}
 by optimality of the anticomonotonic coupling given by the image of the Lebesgue measure on $(0,1)$ by $(0,1)\ni v\mapsto (F_\gamma^{-1}(v),F_{{\cal B}(u)}^{-1}(1-v))$. Under $\beta\in\Pi(\gamma,{\cal B}(u))$, $\gamma(dx)$ a.e., the conditional distribution of the second coordinate given that the first is equal to $x$ gives full weight to $\{0,1\}$. We deduce that $\beta\in\Pi(\gamma,{\cal B}(u))$ if and only if $\beta(dx,dy)=\gamma(dx){\cal B}(p(x))(dy)$ for some measurable function $p:\R\to[0,1]$ such that $\int_{\R}p(x)\gamma(dx)=u$. 
As a consequence,  \begin{equation}
   \forall u\in[0,1],\;\int_0^{u}F_\gamma^{-1}(v)dv=\inf_{p:\R\to[0,1]\mbox{ s.t. }\int_{\R}p(x)\gamma(dx)=u}\int_{\R}xp(x)\gamma(dx).\label{intquant}
 \end{equation}
 We deduce that for $\pi\in\Pi(\mu,\nu)$ and $u\in[0,1]$,
 \begin{align*}
   \int_0^{u}F_{\tilde\pi^\alpha}^{-1}(v)dv=\inf_{p:\R\to[0,1]\mbox{ s.t. }\int_{\R^2}p(\alpha x+(1-\alpha)y)\pi(dx,dy)=u}\int_{\R^2}(\alpha x+(1-\alpha)y)p(\alpha x+(1-\alpha)y)\pi(dx,dy).
 \end{align*}
 For $p$ as in the right-hand side, writing $\pi(dx,dy)=\mu(dx)\pi_x(dy)$ and setting $\rho(x)=\int_\R p(\alpha x+(1-\alpha)y)\pi_x(dy)$, we have that $\int_\R\rho(x)\mu(dx)=\int_{\R^2}p(\alpha x+(1-\alpha)y)\pi(dx,dy)=u$ and, by \eqref{intquant} applied with $\gamma=\mu$,
 $$\int_{\R^2}xp(\alpha x+(1-\alpha)y)\pi(dx,dy)=\int_{\R}x\rho(x)\mu(dx)\ge \int_0^{u}F_\mu^{-1}(v)dv.$$
 Dealing with the other contribution in a symmetric way, we conclude that \eqref{minobarpi} holds.

According to \eqref{carcvod1} and \eqref{intquant}, to prove \eqref{carac} it is enough to check that
for each $u\in [0,1]$, \begin{align}
   \int_0^uF_{\alpha\mu+(1-\alpha)\nu}^{-1}(v)dv=\inf_{w\in[0\vee (u+\alpha-1),u\wedge\alpha ]}\bigg(&\inf_{\rho:\R\to[0,1]\mbox{ s.t. }\int_{\R}\rho(x)\mu(dx)=\frac{w}{\alpha}}\alpha\int_\R x\rho(x)\mu(dx)\notag\\&+\inf_{q:\R\to[0,1]\mbox{ s.t. }\int_{\R}q(x)\nu(dx)=\frac{u-w}{1-\alpha}(1-\alpha)}\int_\R xq(x)\nu(dx)\bigg),\label{intquantcomb}
\end{align}
which we now do. Since for $p:\R\to[0,1]$ measurable such that $\int_{\R}p(x)(\alpha\mu+(1-\alpha)\nu)(dx)=u$, $\int_{\R}p(x)\mu(dx)=\frac w\alpha$ and $\int_{\R}p(x)\nu(dx)=\frac{u-w}{1-\alpha}$ for some $w\in[0\vee (u+\alpha-1),u\wedge\alpha ]$, \eqref{intquant} applied with $\gamma=\alpha\mu+(1-\alpha)\nu$ implies that the left-hand side in \eqref{intquantcomb} is not smaller than the right-hand side. On the other hand, for $w\in[0\vee (u+\alpha-1),u\wedge\alpha ]$, $\rho,q:\R\to[0,1]$ measurable such that $\int_{\R}\rho(x)\mu(dx)=\frac w\alpha$ and $\int_{\R}q(x)\nu(dx)=\frac{u-w}{1-\alpha}$, denoting by $f:\R\to[0,1]$ a density of $\alpha \mu$ with respect to $\alpha\mu+(1-\alpha)\nu$ so that $1-f$ is a density of $(1-\alpha)\nu$ with respect to $\alpha\mu+(1-\alpha)\nu$, we have that
\begin{align*}
  \alpha\int_\R x\rho(x)\mu(dx)+(1-\alpha)\int_\R xq(x)\nu(dx)=\int_\R x(f\rho+(1-f)q)(x)(\alpha\mu+(1-\alpha)\nu)(dx),\\
  u=\alpha\int_\R \rho(x)\mu(dx)+(1-\alpha)\int_\R q(x)\nu(dx)=\int_\R (f\rho+(1-f)q)(x)(\alpha\mu+(1-\alpha)\nu)(dx).
\end{align*}
Since $f\rho+(1-f)q$ is $[0,1]$-valued and measurable, with \eqref{intquant} applied with $\gamma=\alpha\mu+(1-\alpha)\nu$ again, we conclude that the converse inequality between both sides of \eqref{intquantcomb} holds.
\hfill$\Box$
\subsection{Convex order couplings for Gaussian distributions}
For Gaussian distributions, the implication \eqref{COCimpC} turns out to be an equivalence.
\begin{Proposition}
  Let ${\mathbb C},{\mathbb C_1},{\mathbb C_2}\in{\cal S}_+(d)$ and $\alpha\in[0,1]$. We have equivalence between
  \begin{itemize}
   \item [(a)]
   \begin{equation*}
      \exists \Theta\in\R^{d\times d},\;\left(\begin{array}{cc} {\mathbb C}_1 & \Theta \\ \Theta^* & {\mathbb C}_2\end{array}\right)\in{\cal S}_+(2d)\mbox{ and }{\mathbb C}\le \alpha^2 {\mathbb C}_1 +\alpha(1-\alpha)(\Theta+\Theta^*)+(1-\alpha)^2{\mathbb C}_2,
    \end{equation*}
  \item [(b)] $\forall x,y\in\R^d$, ${\rm COC}\left({\cal N}_d(\alpha x+(1-\alpha)y,{\mathbb C}),{\cal N}_d(x,{\mathbb C_1}),{\cal N}_d(y,{\mathbb C_2}),\alpha\right)$ holds,\item [(c)] $\forall x,y\in\R^d$, $\delta_{\alpha x+(1-\alpha)y}\otimes{\cal N}_d(\alpha x+(1-\alpha)y,{\mathbb C})\le_{\rm cx}\alpha\delta_x\otimes{\cal N}_d(x,{\mathbb C_1})+(1-\alpha)\delta_y\otimes{\cal N}_d(y,{\mathbb C_2})$,
  \item[(d)]$\forall x,y\in\R^d$, ${\cal N}_d(\alpha x+(1-\alpha)y,{\mathbb C})\le_{\rm cx} \alpha{\cal N}_d(x,{\mathbb C_1})+(1-\alpha){\cal N}_d(y,{\mathbb C_2})$,\item[(e)] ${\cal N}_d(0,{\mathbb C})\le_{\rm cx} \alpha{\cal N}_d(0,{\mathbb C_1})+(1-\alpha){\cal N}_d(0,{\mathbb C_2})$,
  \item[(f)] $\forall \zeta\in\R^d,\;\sqrt{\zeta^*{\mathbb C}\zeta}\le \alpha\sqrt{\zeta^*{\mathbb C}_1\zeta} +(1-\alpha)\sqrt{\zeta^*{\mathbb C}_2\zeta}.$
  \end{itemize}\label{propequivgauss}
\end{Proposition}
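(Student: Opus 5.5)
I will prove the cycle (a)$\Rightarrow$(b)$\Rightarrow$(c)$\Rightarrow$(d)$\Rightarrow$(e)$\Rightarrow$(f)$\Rightarrow$(a). Only the last step is deep; it is the matrix result of Gu\'eant \cite{gueant}, which I will invoke.

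For (a)$\Rightarrow$(b), fix $x,y\in\R^d$ and $\Theta$ as in (a). Let $\pi={\cal N}_{2d}\big((x,y),\left(\begin{smallmatrix}{\mathbb C}_1&\Theta\\ \Theta^*&{\mathbb C}_2\end{smallmatrix}\right)\big)$, which belongs to $\Pi({\cal N}_d(x,{\mathbb C}_1),{\cal N}_d(y,{\mathbb C}_2))$. Its image $\tilde\pi^\alpha$ is ${\cal N}_d\big(\alpha x+(1-\alpha)y,\ \alpha^2{\mathbb C}_1+\alpha(1-\alpha)(\Theta+\Theta^*)+(1-\alpha)^2{\mathbb C}_2\big)$. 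The converse part of Lemma \ref{lemcvord} (2) then gives ${\cal N}_d(\alpha x+(1-\alpha)y,{\mathbb C})\le_{\rm cx}\tilde\pi^\alpha$.

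The next three steps are short. (b)$\Rightarrow$(c) is the first implication of Lemma \ref{lemcoc2marg}, used with $q=d$ and the same points $x,y$. For (c)$\Rightarrow$(d), integrate a convex function $\varphi(z,w)=\psi(w)$ that depends only on the second variable. (d)$\Rightarrow$(e) is the case $x=y=0$.

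For (e)$\Rightarrow$(f), fix $\zeta$ and test against the convex function $w\mapsto|\zeta^*w|$. Since $\zeta^*W\sim{\cal N}_1(0,\zeta^*M\zeta)$ when $W\sim{\cal N}_d(0,M)$, we have $\E|\zeta^*W|=\sqrt{2/\pi}\sqrt{\zeta^*M\zeta}$. The mixture on the right-hand side of (e) therefore integrates this function to $\sqrt{2/\pi}\big(\alpha\sqrt{\zeta^*{\mathbb C}_1\zeta}+(1-\alpha)\sqrt{\zeta^*{\mathbb C}_2\zeta}\big)$, and (f) follows after dividing by $\sqrt{2/\pi}$.

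The main obstacle is (f)$\Rightarrow$(a), the existence of $\Theta$. Here I rely on \cite{gueant}, which shows that the necessary condition (f) from \cite{JPJMVA} implies the sufficient condition (a). If I had to argue directly, I would first remove the trivial cases $\alpha\in\{0,1\}$, where $\Theta=0$ works. For $\alpha\in(0,1)$ I would treat the case ${\mathbb C}_1,{\mathbb C}_2$ non-singular first and look for $\Theta={\mathbb C}_1^{1/2}O\,{\mathbb C}_2^{1/2}$ with $O$ a contraction (e.g.\ in ${\mathcal O}(d)$). Maximizing $\alpha^2{\mathbb C}_1+\alpha(1-\alpha)(\Theta+\Theta^*)+(1-\alpha)^2{\mathbb C}_2$ in the Loewner order over such $O$, and comparing the result with ${\mathbb C}$ via (f), is a semidefinite-programming duality problem; that duality is exactly what Gu\'eant resolves. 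The singular case would follow by replacing ${\mathbb C}_i$ with ${\mathbb C}_i+\varepsilon I$, which keeps (f), and then extracting a limit of the corresponding $\Theta_\varepsilon$. These are bounded because the block matrix is positive semi-definite, so $|\Theta_\varepsilon|\le\sqrt{|{\mathbb C}_1+\varepsilon I||{\mathbb C}_2+\varepsilon I|}$. Both conditions in (a) are closed, so they pass to the limit.
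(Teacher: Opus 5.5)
Your chain of implications matches the paper's. You use the Gaussian coupling with cross-covariance $\Theta$ for (a)$\Rightarrow$(b), Lemma \ref{lemcoc2marg} for (b)$\Rightarrow$(c), and marginals for (c)$\Rightarrow$(d). For (e)$\Rightarrow$(f) you use the test function $w\mapsto|\zeta^*w|$, a step the paper delegates to \cite{JPJMVA}. For (f)$\Rightarrow$(a) you rely on \cite{gueant}, as the paper does. All of this is correct.

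The one point to repair is how you reduce (f)$\Rightarrow$(a) to the non-degenerate case. Gu\'eant's Theorem 4, as stated, is about the equivalence of (e) with the one-dimensional projected convex order. What yields (a) is his proof, and the matrix it produces is $\Theta={\mathbb C}^{1/2}O^*L_1L_2^*O{\mathbb C}^{1/2}$ with $L_iL_i^*=O{\mathbb C}^{-1/2}{\mathbb C}_i{\mathbb C}^{-1/2}O^*$. This needs ${\mathbb C}$ itself to be invertible, not only ${\mathbb C}_1$ and ${\mathbb C}_2$.

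Adding $\varepsilon I$ to ${\mathbb C}_1,{\mathbb C}_2$ alone preserves (f) trivially, since the right-hand side only increases. However, it leaves the case of singular ${\mathbb C}$ uncovered. Adding $\varepsilon I$ to ${\mathbb C}$ alone can break (f) in directions where it is tight.

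The paper adds $\frac1n I_d$ to all three matrices and checks by squaring that (f) survives. Set $a=\zeta^*{\mathbb C}\zeta$, $a_i=\zeta^*{\mathbb C}_i\zeta$ and $e=|\zeta|^2/n$. The inequality $\sqrt{(a_1+e)(a_2+e)}\ge\sqrt{a_1a_2}+e$ then gives $\big(\alpha\sqrt{a_1+e}+(1-\alpha)\sqrt{a_2+e}\big)^2\ge\big(\alpha\sqrt{a_1}+(1-\alpha)\sqrt{a_2}\big)^2+e\ge a+e$.

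With this perturbation, your compactness argument goes through unchanged. The $\Theta_n$ are bounded because the block matrices are positive semi-definite. Both conditions in (a) are closed, so a limit point $\Theta$ works for the original ${\mathbb C},{\mathbb C}_1,{\mathbb C}_2$.
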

\noindent {\bf Proof.} The successive implications $(a)\Rightarrow (b)\Rightarrow (d)\Rightarrow (e)\Rightarrow (f)$ follow from \cite[Theorem 1]{JPJMVA} and its proof with the convex order coupling in $(b)$ equal to ${\cal N}_{2d}\left(\left(\begin{array}{c}x
   \\y
\end{array}\right),\left(\begin{array}{cc} {\mathbb C}_1 & \Theta \\ \Theta^* & {\mathbb C}_2\end{array}\right)\right)$. The implication $(b)\Rightarrow (c)$ follows from Lemma \ref{lemcoc2marg}, while we have $(c)\Rightarrow(d)$ since the convex order between two probability measures implies the convex order between their marginals. The implication $(f)\Rightarrow (a)$ is a consequence of the proof of \cite[Theorem 4]{gueant} which only states the equivalence of ${\cal N}_d(0,{\mathbb C})\le_{\rm cx} \alpha{\cal N}_d(0,{\mathbb C_1})+(1-\alpha){\cal N}_d(0,{\mathbb C_2})$ with the same inequality for the images by $\R^d\ni x\mapsto\zeta^*x$ for each $\zeta\in\R^d$ (one-dimensional projected convex order) which is easily shown to be equivalent to $(f)$ (see \cite{JPJMVA}). This proof ensures that when ${\mathbb C}, {\mathbb C}_1, {\mathbb C}_2$ are positive definite, there exists $O\in{\cal O}(d)$ such that, denoting for $i\in\{1,2\}$ by $L_i\in\R^{d\times d}$ the lower triangular matrix such that $L_iL_i^*= O{\mathbb C}^{-1/2}{\mathbb C}_i{\mathbb C}^{-1/2}O^*$ obtained by the Cholesky decomposition, $(a)$ holds with $\Theta={\mathbb C}^{1/2}O^*L_1L_2^*O{\mathbb C}^{1/2}$. When one of the matrices ${\mathbb C}, {\mathbb C}_1, {\mathbb C}_2$ is singular, then $(f)$ holds for ${\mathbb C}$, ${\mathbb C}_1$ and ${\mathbb C}_2$ replaced by ${\mathbb C}+\frac 1n I_d$, ${\mathbb C}_1+\frac 1n I_d$ and ${\mathbb C}_2+\frac 1 nI_d$ for $n\in\N^*$ as easily seen by squaring the inequalities. Hence, for each $n\in\N^*$, there exists $\Theta_n\in\R^{d\times d}$ such that $\left(\begin{array}{cc} {\mathbb C}_1 +\frac 1n I_d& \Theta_n \\ \Theta^*_n & {\mathbb C}_2+\frac 1n I_d\end{array}\right)\in{\cal S}_+(2d)$ and ${\mathbb C}+\frac 1n I_d\le \alpha^2 \left({\mathbb C}_1+\frac 1n I_d\right) +\alpha(1-\alpha)(\Theta_n+\Theta^*_n)+(1-\alpha)^2\left({\mathbb C}_2+\frac 1n I_d\right)$. Since $|(\Theta_n)_{ij}|\le \sqrt{(({\mathbb C}_1)_{ii}+\frac 1n)(({\mathbb C}_2)_{jj}+\frac 1n)}$ for $i,j\in\{1,\cdots,d\}$, we can extract from $(\Theta_n)_{n\in\N^*}$ a subsequence converging to some limit $\Theta\in\R^{d\times d}$. By taking the limit in the above properties of $\Theta_n$, we check that $\Theta$ satisfies the conditions in $(a)$.\hfill$\Box$
   
\section{Main results}\label{secmr}
We consider some measurable map $\R^d\ni x\mapsto a(x)\in{\cal S}_+(d)$ and introduce the associated martingale problem.
\begin{Definition}\label{defmp}\begin{itemize}
   \item Let $(t,x)\in [0,+\infty)\times\R^d$ and $(\xi_r)_{r\in[0,t]}$ denote the canonical process on $C([0,t],\R^d)$.
   A probability measure $P^x_{\le t}$ on $C([0,t],\R^d)$ is a solution to the martingale problem $(MP^x_{\le t})$ if $P^x_{\le t}(\xi_0=x)=1$ and 
  $$\left(M^\varphi_s=\varphi(\xi_s)-\varphi(x)-\frac 12\int_0^s{\rm tr}\left(a(\xi_r)\nabla^2\varphi(\xi_r)\right)
    dr\right)_{s\in[0,t]}$$
  is a $P^x_{\le t}$-martingale for each $\varphi:\R^d\to\R$ ${\cal C}^\infty$ with compact support.  
  \item We say that the martingale problem is well-posed if for each $(t,x)\in [0,+\infty)\times\R^d$,
$(MP^x_{\le t})$ admits a unique solution $P^x_{\le t}$.
\end{itemize}
\end{Definition}\begin{Remark}\label{remwpmp}
   
  Note that the martingale problem is well-posed
\begin{itemize}
\item when $\R^d\ni x\mapsto a(x)\in{\cal S}_+(d)$ is continuous, bounded and uniformly positive definite, according to \cite[Theorem 7.2.1]{StroVa},
\item when $d\in\{1,2\}$ and  $\R^d\ni x\mapsto a(x)\in{\cal S}_+(d)$ is measurable, bounded and locally uniformly positive definite, according to \cite[Exercises 7.3.2, 7.3.3 and 7.3.4]{StroVa}, 
  \item when $a(x)=\gamma(x)\sigma(x)\sigma^*(x)$ for some measurable function $\gamma:\R^d\to\R$ such that $0<\inf_{x\in\R^d}\gamma(x)\le \sup_{x\in\R^d}\gamma(x)<+\infty$ and some locally Lipschitz function $\sigma:\R^d\mapsto \R^{d\times q}$ with affine growth. This relies on the existence and trajectorial uniqueness for the stochastic differential equation with Brownian coefficient $\sigma$ (and no drift coefficient), the Yamada--Watanabe theorem and the method of time-change studied in \cite[Proposition 1.13 Chapter IX]{RevuzYor} which enables the introduction of the factor $\gamma$.
\end{itemize}
\end{Remark}
In our main result, we suppose that the martingale problem is well-posed. In its proof, we will need the convergence to  a solution to $(MP^x_{\le t})$ of the distribution of the Euler discretization of the associated stochastic differential equation as the number of time-steps goes to $\infty$.

To define the Euler scheme, we denote for $(z,s)\in\R^d\times (0,+\infty)$ by $\nu^z_s(d\tilde z)$ the Gaussian distribution ${\cal N}_d(z,a(z)s)$ involved in the scheme with step $s$. For $n\in \N^*$, $x\in\R^d$ and $t\in (0,+\infty)$, we consider the distribution $P^{n,x}_{\le t}$ of the linearly interpolated Euler scheme with $n$ steps on the time interval $[0,t]$ started from $x$. The probability measure  $P^{n,x}_{\le t}$ is the image of  $\delta_x(dx_0)\prod_{k=0}^{n-1}\nu^{x_k}_{t/n}(dx_{k+1})$ by the linear interpolation operator $i_n:(\R^d)^{n+1}\to C([0,t],\R^d)$ defined by
\begin{equation}
   i_n(x_{0:n})(s)=\frac{(k+1)t/n-s}{t/n}x_k+\frac{s-kt/n}{t/n}x_{k+1}\mbox{ for }s\in[kt/n,(k+1)t/n]\mbox{ with }k\in\{0,\cdots,n-1\}.\label{delfin}
\end{equation} \begin{Proposition}\label{prop:cveul}Let $(t,\rho)\in (0,+\infty)\times [1,+\infty)$ and assume \begin{description}
\item[(A1)] $\R^d\ni x\mapsto a(x)\in{\cal S}_+(d)$ is continuous and such that $\sup_{x\in\R^d}\frac{|a(x)|}{1+|x|^2}<\infty$.
\end{description}Then there exists some finite constant $C$ only depending on $d,t,\rho$ and $\sup_{x\in\R^d}\frac{|a(x)|}{1+|x|^2}$ such that $$\sup_{n\in\N^*,x\in\R^d}\frac 1{1+|x|^\rho}\int_{C([0,t],\R^d)}\sup_{s\in[0,t]}|\xi_s|^\rho P^{n,x}_{\le t}(d\xi)\le C.$$ Moreover, for each $x\in\R^d$, the sequence $(P^{n,x}_{\le t})_{n\in\N^*}$ is relatively compact in ${\cal P}_\rho(C([0,t],\R^d))$ and the limit of any converging subsequence solves the martingale problem $(MP^x_{\le t})$.
\end{Proposition}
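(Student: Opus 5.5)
The argument has two parts: a uniform moment bound for the Euler scheme, proved by a discrete martingale argument, and then tightness plus identification of the limit through the discrete martingale property.

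\textbf{Moment bound.} Write $X_k$ for the Euler iterates, so that $X_{k+1}=X_k+\sqrt{a(X_k)t/n}\,G_{k+1}$ with $(G_k)$ i.i.d.\ ${\cal N}_d(0,I_d)$ independent of $X_0=x$. Then $(X_k)_{0\le k\le n}$ is a martingale. Since the interpolated path is piecewise linear, $\sup_{s\in[0,t]}|i_n(X_{0:n})(s)|=\max_{k}|X_k|$. It therefore suffices to bound $\E[\max_k|X_k|^\rho]$, and we may assume $\rho\ge 2$ by Jensen. By the Burkholder--Davis--Gundy inequality for discrete martingales,
$$\E\big[\max_{k\le m}|X_k-x|^\rho\big]\le C\,\E\Big[\Big(\sum_{k<m}|a(X_k)|\,|G_{k+1}|^2t/n\Big)^{\rho/2}\Big].$$
Using (A1), $|a(X_k)|\le K(1+|X_k|^2)$. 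Hölder's inequality on the sum of $m\le n$ terms then bounds the right-hand side by
$$C\,\frac{t^{\rho/2}}{n}\sum_{k<m}\E\big[(1+|X_k|^\rho)|G_{k+1}|^\rho\big]=C'\frac{t^{\rho/2}}{n}\sum_{k<m}\big(1+\E|X_k|^\rho\big),$$
where the equality uses the independence of $G_{k+1}$ from $X_k$. Setting $u_m=\E[\max_{k\le m}|X_k|^\rho]$, we get $u_m\le C(|x|^\rho+1)+\frac{C}{n}\sum_{k<m}u_k$. The discrete Gronwall lemma gives $u_n\le C e^{C}(1+|x|^\rho)$, with constants depending only on $d,t,\rho,K$, which is the claimed bound.

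\textbf{Relative compactness in ${\cal P}_\rho$.} Apply the moment bound with some $\rho'>\rho$. This gives uniform integrability of $\sup_s|\xi_s|^\rho$, so by \eqref{cvwr} it is enough to prove tightness in $C([0,t],\R^d)$ for the weak topology. For this we use Kolmogorov's criterion. For grid times $s<r$, BDG and the moment bound give
$$\E|X_{nr/t}-X_{ns/t}|^{p}\le C(1+|x|^{p})|r-s|^{p/2}.$$
Linear interpolation preserves such an estimate for arbitrary $s<r\in[0,t]$, with a modified constant; inside a single step, use the linearity in time and compare with $t/n\ge r-s$. Choosing $p>2$ yields Kolmogorov tightness, hence, combined with the uniform integrability, relative compactness in ${\cal P}_\rho(C([0,t],\R^d))$.

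\textbf{Identification of the limit.} Fix $\varphi\in{\cal C}^\infty_c$. Let $0\le s_1<\dots<s_j\le s<r\le t$ and let $h$ be bounded continuous on $(\R^d)^j$. Taylor's formula and $\E[\varphi(X_k+\sqrt{a(X_k)t/n}G)\mid X_k]$ give the one-step expansion
$$\E\big[\varphi(X_{k+1})-\varphi(X_k)\,\big|\,X_{0:k}\big]=\tfrac{t}{2n}{\rm tr}\big(a(X_k)\nabla^2\varphi(X_k)\big)+R_k,$$
with $|R_k|\le C(t/n)^{3/2}\,|a(X_k)|^{3/2}$ restricted to the region where $X_k$ or $X_{k+1}$ lies near the support of $\varphi$. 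The moment bound makes $\sum_k\E|R_k|=O(n^{-1/2})$. Summing over steps gives $\E\big[(M^{\varphi,n}_r-M^{\varphi,n}_s)h(\xi_{s_1},\dots,\xi_{s_j})\big]\to 0$, where $M^{\varphi,n}$ uses a piecewise constant discretized integral. Along a converging subsequence $P^{n_l,x}_{\le t}\to P$, the functional $\xi\mapsto(\varphi(\xi_r)-\varphi(\xi_s)-\frac12\int_s^r{\rm tr}(a\nabla^2\varphi)(\xi_u)du)h(\cdots)$ is bounded and continuous, using the continuity of $a$ and the compact support of $\varphi$. The difference between the discretized and continuous integrals vanishes by uniform continuity on compacts together with tightness. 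Passing to the limit gives the martingale property, and $P(\xi_0=x)=1$ is immediate.

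\textbf{Main obstacle.} The hardest step is controlling the Taylor remainder uniformly: $a$ is only continuous with quadratic growth, so $a(X_k)$ is unbounded. I would split on $\{|X_k|\le R\}$, where $a$ is bounded and the remainder is $O(n^{-3/2})$, and on its complement, where $\nabla^2\varphi(X_k)=0$ for large $R$. On the complement, the residual contribution $\E[\varphi(X_{k+1})\ind_{|X_k|>R}]$ is small because a jump from far away into the support of $\varphi$ requires $|G|\gtrsim\sqrt{n}$ after normalisation by $\sqrt{|a(X_k)|t/n}\lesssim|X_k|\sqrt{t/n}$. This needs a careful but routine Gaussian tail estimate.
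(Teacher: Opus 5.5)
Your proof is correct, but it takes a different route from the paper's. The paper never works directly with the chain $X_{0:n}$. Instead it introduces the Brownian-interpolated Euler process $X^{n,x}_s=X^{n,x}_{kt/n}+a^{1/2}(X^{n,x}_{kt/n})(W_s-W_{kt/n})$ and proceeds as follows:
\begin{itemize}
\item it gets the moment bound from the continuous-time Burkholder--Davis--Gundy inequality and Gronwall's lemma;
\item it applies Kolmogorov's criterion to the laws of this process;
\item it transfers compactness and the limit identification to $P^{n,x}_{\le t}$ through the estimate $\E[\sup_s|i_n((X^{n,x}_{kt/n})_{k})(s)-X^{n,x}_s|^\rho]\le Cn^{1-\rho/2}$.
\end{itemize}
The payoff is in the identification step. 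By It\^o's formula the discretized martingale functional has exactly zero expectation, $\E[F_n(X^{n,x})]=0$, so no Taylor remainder appears. What remains is the mismatch between $a(X^{n,x}_{\lfloor nu/t\rfloor t/n})\nabla^2\varphi(X^{n,x}_u)$ and $a\nabla^2\varphi(X^{n,x}_u)$, which the paper handles with Cauchy--Schwarz and the moment bound.

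Your version stays with the discrete chain and its linear interpolation throughout: discrete BDG, discrete Gronwall, Kolmogorov on the interpolated path, and a one-step Taylor expansion of the transition. It is therefore free of stochastic calculus and needs no comparison between two interpolations. The price is an approximate rather than exact martingale property. You also need to round non-grid times $s<r$ up to grid points, because $h(\xi_{s_1},\dots,\xi_{s_j})$ depends on $X_{\lceil ns/t\rceil}$. This rounding costs only $\|\nabla\varphi\|_\infty\E|X_{k+1}-X_k|+O(1/n)=O(n^{-1/2})$, but it should be stated explicitly.

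The step you flag as the main obstacle is not actually one. Since $\nabla^3\varphi$ is bounded, the remainder satisfies $|R_k|\le C(t/n)^{3/2}\E|G|^3\,|a(X_k)|^{3/2}$ with no localisation. Your moment bound with $\rho=3$ gives $\E|a(X_k)|^{3/2}\le C(1+|x|^3)$ uniformly in $k$ and $n$. Hence $\sum_{k<n}\E|R_k|\le Cn^{-1/2}(1+|x|^3)$ directly, and the truncation at $|X_k|\le R$ with the Gaussian tail estimate is valid but unnecessary.
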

The short proof of this rather standard result is postponed until after that of our main results in Section \ref{secproofs}.
We now introduce the assumption \begin{description}
   \item[(A2)] For each $(t,x)\in [0,+\infty)\times\R^d$, uniqueness holds for the martingale problem $(MP^x_{\le t})$,
\end{description}to state the following immediate consequence of  Proposition \ref{prop:cveul}.
\begin{Corollary}\label{corcveul}
  Under {\bf (A1)} and {\bf (A2)}, the martingale problem is well-posed. Moreover, for each $\rho\in[1,+\infty)$ and each $(t,x)\in (0,+\infty)\times\R^d$, ${\cal W}_\rho(P^{n,x}_{\le t},P^x_{\le t})\rightarrow 0$ as $n\to\infty$ where $P^x_{\le t}$ denotes the unique solution of $(MP^x_{\le t})$.
\end{Corollary}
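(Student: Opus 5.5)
We derive Corollary \ref{corcveul} directly from Proposition \ref{prop:cveul}, combined with the uniqueness assumption {\bf (A2)} and the usual "every subsequence has a further subsequence" argument.

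\emph{Existence and well-posedness.} Fix $(t,x)\in(0,+\infty)\times\R^d$. By Proposition \ref{prop:cveul} with any $\rho\ge1$, the sequence $(P^{n,x}_{\le t})_{n\in\N^*}$ is relatively compact in ${\cal P}_\rho(C([0,t],\R^d))$. Hence it has at least one converging subsequence, and its limit solves $(MP^x_{\le t})$. Together with {\bf (A2)}, this shows that $(MP^x_{\le t})$ has exactly one solution $P^x_{\le t}$.

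The case $t=0$ is trivial: $C(\{0\},\R^d)\simeq\R^d$, and the only candidate is $\delta_x$, which satisfies the martingale condition since $M^\varphi_0=0$. So the martingale problem is well-posed in the sense of Definition \ref{defmp}.

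\emph{Convergence.} Fix $\rho\in[1,+\infty)$ and $(t,x)$ with $t>0$. Suppose, for contradiction, that ${\cal W}_\rho(P^{n,x}_{\le t},P^x_{\le t})\not\to0$. Then there exist $\varepsilon>0$ and a subsequence $(n_k)$ with ${\cal W}_\rho(P^{n_k,x}_{\le t},P^x_{\le t})\ge\varepsilon$ for all $k$.

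By relative compactness in ${\cal P}_\rho$ (Proposition \ref{prop:cveul} with this $\rho$), a further subsequence converges in ${\cal W}_\rho$ to some $Q$. Again by Proposition \ref{prop:cveul}, $Q$ solves $(MP^x_{\le t})$, so uniqueness gives $Q=P^x_{\le t}$. This contradicts the lower bound $\varepsilon$, which proves the claim.

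There is no real obstacle here. The only point needing care is that the relative compactness and the identification of limits hold in ${\cal P}_\rho$ itself, not only for weak convergence. Proposition \ref{prop:cveul} provides exactly this for each $\rho$. If one preferred to argue only from weak compactness, the uniform $\rho'$-moment bound with $\rho'>\rho$ from the same proposition would give uniform integrability of $\sup_s|\xi_s|^\rho$. By \eqref{cvwr}, this upgrades weak convergence to ${\cal W}_\rho$ convergence.
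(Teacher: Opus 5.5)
Your proof is correct and is exactly the argument the paper intends when it calls this corollary an immediate consequence of Proposition \ref{prop:cveul}. Existence comes from relative compactness in ${\cal P}_\rho$ together with the identification of limits, uniqueness from {\bf (A2)}, and convergence of the whole sequence from the subsequence-of-subsequence argument. Your separate treatment of the trivial case $t=0$, which the proposition (stated for $t>0$) does not cover, is a detail the paper leaves implicit.
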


When the martingale problem is well-posed and in particular under {\bf (A1)} and {\bf (A2)} in view of Corollary \ref{corcveul}, for $0\le s\le t$, $P^x_{\le s}$ is the image of $P^x_{\le t}$ by the restriction $C([0,t],\R^d)\ni (\xi_r)_{r\in[0,t]}\mapsto (\xi_r)_{r\in[0,s]}\in C([0,s],\R^d)$. For $t\in [0,+\infty)$, we denote by \begin{itemize}
\item $P^x_t$ the common image of $P^x_{\le T}$ by $C([0,T],\R^d)\ni (\xi_r)_{r\in[0,T]}\mapsto \xi_t\in \R^d$ for $T\in[t,+\infty)$,
\item $P^x_{0,t}$ the image of $P^x_{t}(d\tilde x)$ by $\R^d\ni\tilde x\mapsto (x,\tilde x)\in \R^d\times \R^d$.
\end{itemize} Note that $P^x_{0,t}$ is also the image of $P^x_{\le T}$ by $C([0,T],\R^d)\ni (\xi_r)_{r\in[0,T]}\mapsto (\xi_0,\xi_t)\in \R^d\times\R^d$ for any $T\in[t,+\infty)$.

\begin{Definition}\begin{itemize}
  \item For $t\ge 0$, when the martingale problem is well-posed and for each $x\in\R^d$, $P^x_{\le t}\in {\cal P}_1(C([0,t],\R^d))$, we say that $\R^d\ni x\mapsto P^x_{\le t}$ (resp. $x\mapsto P^x_t$) is coupling-convex if for each $(x,y,\alpha)\in\R^d\times \R^d\times[0,1]$, ${\rm COC}(P^{\alpha x+(1-\alpha)y}_{\le t},P^{x}_{\le t},P^{y}_{\le t},\alpha)$ (resp. ${\rm COC}(P^{\alpha x+(1-\alpha)y}_{t},P^{x}_{t},P^{y}_{t},\alpha)$) holds.
    \item In the same way, for $(n,t)\in\N^*\times \R_+$ such that for each $x\in\R^d$, $P^{n,x}_{\le t}\in {\cal P}_1(C([0,t],\R^d))$, we say that $\R^d\ni x\mapsto P^{n,x}_{\le t}$ is coupling-convex if for each $(x,y,\alpha)\in\R^d\times \R^d\times[0,1]$, ${\rm COC}(P^{n,\alpha x+(1-\alpha)y}_{\le t},P^{n,x}_{\le t},P^{n,y}_{\le t},\alpha)$ holds.
\end{itemize} 
\end{Definition}
When $x=y$ or $\alpha\in\{0,1\}$, ${\rm COC}(P^{\alpha x+(1-\alpha)y}_{t},P^{x}_{t},P^{y}_{t},\alpha)$ obviously holds (with the convex coupling equal to the image of $P^x_t$ by $\R^d\ni\tilde x\mapsto (\tilde x,\tilde x)\in\R^{2d}$ when $x=y$). With Lemma \ref{lemcoc2marg} and the equality $P^x_{0,t}=\delta_x\otimes P^x_t$ for $(t,x)\in\R_+\times\R^d$, we immediately deduce the following statement.
\begin{Proposition} \label{propeqcvccv}Assume that the martingale problem is well-posed and let $t\ge 0$ be such that $P^x_t\in{\cal P}_1(\R^d)$ for each $x\in\R^d$. Then
\begin{align*}
   &\R^d\ni x\mapsto P^x_t\mbox{ is coupling-convex}\\\Leftrightarrow &\R^d\ni x\mapsto P^x_{0,t}\mbox{ is convex when the codomain is endowed with the convex order.}
\end{align*}
\end{Proposition}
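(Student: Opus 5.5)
The plan is to reduce the statement, pointwise in $(x,y,\alpha)$, to Lemma \ref{lemcoc2marg} applied with $q=d$, $\eta=P^{\alpha x+(1-\alpha)y}_t$, $\mu=P^x_t$ and $\nu=P^y_t$. All three measures lie in ${\cal P}_1(\R^d)$ by assumption, so the lemma applies.

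The first step is to write out what convexity of $x\mapsto P^x_{0,t}$ means when the codomain carries the convex order. It means that for all $(x,y,\alpha)\in\R^d\times\R^d\times[0,1]$,
$$P^{\alpha x+(1-\alpha)y}_{0,t}\le_{\rm cx}\alpha P^x_{0,t}+(1-\alpha)P^y_{0,t}.$$
Since $P^z_{0,t}=\delta_z\otimes P^z_t$, this inequality reads
$$\delta_{\alpha x+(1-\alpha)y}\otimes P^{\alpha x+(1-\alpha)y}_t\le_{\rm cx}\alpha\,\delta_x\otimes P^x_t+(1-\alpha)\,\delta_y\otimes P^y_t.$$

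For the direction ($\Rightarrow$), I would take ${\rm COC}(P^{\alpha x+(1-\alpha)y}_t,P^x_t,P^y_t,\alpha)$ and invoke the first implication of Lemma \ref{lemcoc2marg}, with $q=d$ and the points $x,y\in\R^d$ themselves as the $q$-dimensional points. This yields the displayed inequality directly, for every $x,y$ including $x=y$.

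For the direction ($\Leftarrow$), I would first dispose of the degenerate cases $x=y$ or $\alpha\in\{0,1\}$. In these cases ${\rm COC}$ holds trivially, as noted just before the statement: when $x=y$ one uses the diagonal coupling, and when $\alpha\in\{0,1\}$ the relevant measures coincide. When $x\neq y$ and $\alpha\in(0,1)$, the displayed inequality is exactly the third assertion of Lemma \ref{lemcoc2marg} with distinct points $x,y\in\R^d$. The lemma then gives ${\rm COC}(P^{\alpha x+(1-\alpha)y}_t,P^x_t,P^y_t,\alpha)$, which is coupling-convexity.

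There is no real obstacle here. The only care needed is that the third assertion of the lemma requires distinct points; that proof uses $\alpha\in(0,1)$ and $x\ne y$, which is why the degenerate cases are handled separately.
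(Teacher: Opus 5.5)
Your proposal is correct and follows the paper's own argument. The paper likewise handles the degenerate cases $x=y$ (via the diagonal coupling) and $\alpha\in\{0,1\}$ directly, and otherwise applies Lemma \ref{lemcoc2marg} together with the identity $P^x_{0,t}=\delta_x\otimes P^x_t$.
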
\begin{Remark}\label{remeqcvccv}Note that one also has equivalence with convexity of $\R^d\ni x\mapsto \delta_x\otimes P^x_{0,t}$ (for $\varphi:(\R^d)^3\to\R$ convex, $\R^d\times\R^d\ni(z,w)\mapsto \varphi(z,z,w)$ keeps convex) and therefore with coupling-convexity of $\R^d\ni x\mapsto P^x_{0,t}$ (with obvious definition) by Lemma \ref{lemcoc2marg}. Note that when the martingale problem is well-posed and for each $x\in\R^d$, $P^x_{\le t}\in {\cal P}_1(C([0,t],\R^d))$, we also have equivalence of convexity and coupling-convexity of $\R^d\ni x\mapsto P^x_{\le t}$. Indeed, for $n\in\N^*$, we similarly derive equivalence of convexity and coupling-convexity of $\R^d\ni x\mapsto P^{x}_{(0:n)t/n}$ where $P^{x}_{(0:n)t/n}$ is the image of $P^x_{\le t}$ by $C([0,t],\R^d)\ni(\xi_s)_{s\in[0,t]}\mapsto (\xi_{kt/n})_{k=0:n}\in(\R^d)^{n+1}$. We can conclude by convergence of the image of $P^{x}_{(0:n)t/n}$ by the linear interpolation operator $i_n$ defined in \eqref{delfin} to $P^x_{\le t}$ in ${\cal P}_1(C([0,t],\R^d))$ (see Remark \ref{remcouple} below for more details). 
\end{Remark}

Let us state another equivalence which is an immediate consequence of Proposition \ref{propequivgauss}.
\begin{Proposition}
  \begin{align*}
   &\forall \zeta\in\R^d,\;\R^d\ni x\mapsto \sqrt{\zeta^*a(x)\zeta}\mbox{ is convex }\\\Leftrightarrow &\forall (x,y,\alpha)\in\R^d\times \R^d\times[0,1],\;\exists \Theta\in\R^{d\times d}\mbox{ such that }\\& \left(\begin{array}{cc} a(x) & \Theta \\ \Theta^* & a(y)\end{array}\right)\in{\cal S}_+(2d)\mbox{ and }a(\alpha x+(1-\alpha)y)\le \alpha^2 a(x)+\alpha(1-\alpha)(\Theta+\Theta^*)+(1-\alpha)^2a(y).
  \end{align*}\label{propeqmat}
\end{Proposition}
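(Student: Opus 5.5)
This is a pointwise application of Proposition \ref{propequivgauss}, specifically the equivalence $(a)\Leftrightarrow(f)$. For fixed $(x,y,\alpha)\in\R^d\times\R^d\times[0,1]$ we take
$$\mathbb C=a(\alpha x+(1-\alpha)y),\qquad \mathbb C_1=a(x),\qquad \mathbb C_2=a(y),$$
all of which lie in ${\cal S}_+(d)$ because $a$ takes values there. With these choices, condition $(a)$ of Proposition \ref{propequivgauss} is exactly the existence of $\Theta$ in the right-hand side of the statement, for this triple $(x,y,\alpha)$.

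Condition $(f)$ reads, for every $\zeta\in\R^d$,
$$\sqrt{\zeta^*a(\alpha x+(1-\alpha)y)\zeta}\le\alpha\sqrt{\zeta^*a(x)\zeta}+(1-\alpha)\sqrt{\zeta^*a(y)\zeta}.$$

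The first step is to observe that, for fixed $\zeta$, requiring this inequality for all $(x,y,\alpha)$ is precisely the definition of convexity of the real-valued function $x\mapsto\sqrt{\zeta^*a(x)\zeta}$.

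The second step swaps the order of the quantifiers. The statement ``for all $\zeta$, for all $(x,y,\alpha)$, $(f)$ holds'' is the same as ``for all $(x,y,\alpha)$, for all $\zeta$, $(f)$ holds''.

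The third step applies $(f)\Leftrightarrow(a)$ of Proposition \ref{propequivgauss} for each fixed $(x,y,\alpha)$. This gives the existence of $\Theta$ (which may depend on $(x,y,\alpha)$) satisfying both the block positive semi-definiteness and the matrix inequality. Combining the three steps yields both directions of the claimed equivalence.

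There is no real obstacle once Proposition \ref{propequivgauss} is available. The substantive content, namely $(f)\Rightarrow(a)$ via Gu\'eant's construction and the approximation argument for singular matrices, has already been handled there. The only care needed is to check that no regularity of $a$ is required. None is, since the argument is purely pointwise in $(x,y,\alpha)$, and Proposition \ref{propequivgauss} places no restriction on $\alpha\in[0,1]$ or on singularity of the matrices.
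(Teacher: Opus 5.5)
Your proposal is correct and follows the paper's route exactly. The paper presents this statement as an immediate consequence of Proposition \ref{propequivgauss}: it applies the equivalence $(a)\Leftrightarrow(f)$ for each fixed $(x,y,\alpha)$ with ${\mathbb C}=a(\alpha x+(1-\alpha)y)$, ${\mathbb C}_1=a(x)$, ${\mathbb C}_2=a(y)$, and exchanges the quantifiers over $\zeta$ and $(x,y,\alpha)$, just as you do.
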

\begin{Remark}
   Note that the convexity of $\R^d\ni x\mapsto \sqrt{\zeta^*a(x)\zeta}$ for each $\zeta\in\R^d$ implies the continuity of $\R^d\ni x\mapsto \zeta^*a(x)\zeta$ for each $\zeta\in\R^d$ and thus the continuity of $\R^d\ni x\mapsto a(x)$ (first take $\zeta$ to be any vector in the canonical basis to get continuity of the diagonal entries and then take $\zeta$ to be the sum of two distinct such vectors to deduce continuity of the extra-diagonal entries). As a consequence, the second equivalent assertion in Proposition \ref{propeqmat} also implies continuity of $a$. 
\end{Remark}
We now are ready to state our main result.\begin{Theorem}Consider the following properties:
 \begin{description}\item $(i)$ $\forall \zeta\in\R^d$, $\R^d\ni x\mapsto \sqrt{\zeta^*a(x)\zeta}$ is convex.
 \item $(ii)$ $\forall (x,y,\alpha)\in\R^d\times \R^d\times[0,1]$, $\exists \Theta\in\R^{d\times d}$ such that
   \begin{equation}
      \left(\begin{array}{cc} a(x) & \Theta \\ \Theta^* & a(y)\end{array}\right)\in{\cal S}_+(2d)\mbox{ and }a(\alpha x+(1-\alpha)y)\le \alpha^2 a(x)+\alpha(1-\alpha)(\Theta+\Theta^*)+(1-\alpha)^2a(y),\label{defthet}
    \end{equation}

 \item $(iii)$ For each $(n,t)\in\N^*\times \R_+$, $\R^d\ni x\mapsto P^{n,x}_{\le t}$ is coupling-convex,
 \item $(iv)$ For each $t\ge 0$, $\R^d\ni x\mapsto P^x_{\le t}$ is coupling-convex, 
 \item $(v)$ For each $t\ge 0$, $\R^d\ni x\mapsto P^x_{t}$ is coupling-convex,
   
  \item $(vi)$ $\forall t\ge 0$, $\R^d\ni x\mapsto P^x_{0,t}$ is convex when the codomain is endowed with the convex order, 
  
\end{description}
  \begin{itemize}
  \item Under {\bf (A1)} and {\bf (A2)}, we have $(i)\Leftrightarrow (ii) \Leftrightarrow (iii)\Leftrightarrow (iv)\Leftrightarrow (v)\Leftrightarrow (vi)$ and all the above assertions are also equivalent to
  $$(vii)\;\forall t\ge 0,\R^d\ni x\mapsto P^x_{\le t}\mbox{ is convex when the codomain is endowed with the convex order,}$$

\item As soon as the martingale problem is well-posed and for each $(t,x)\in\R_+\times \R^d$, $P^x_{\le t}\in {\cal P}_1(C([0,t],\R^d))$, we have $(iv)\Rightarrow (v)\Leftrightarrow (vi)$.
 
\end{itemize}

 \label{mainthm}
\end{Theorem}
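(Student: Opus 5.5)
The plan is to close the cycle $(i)\Leftrightarrow(ii)\Rightarrow(iii)\Rightarrow(iv)\Rightarrow(v)\Leftrightarrow(vi)\Rightarrow(i)$ and to attach $(vii)$ to $(iv)$. Some links are already available:
\begin{itemize}
\item $(i)\Leftrightarrow(ii)$ is Proposition \ref{propeqmat}.
\item $(v)\Leftrightarrow(vi)$ is Proposition \ref{propeqcvccv}, which also needs the integrability of $P^x_t$ given by Proposition \ref{prop:cveul}.
\item $(iv)\Leftrightarrow(vii)$ is Remark \ref{remeqcvccv}.
\item $(iv)\Rightarrow(v)$ holds under the weaker assumptions of the second bullet. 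Given a coupling $\pi$ of $P^x_{\le t},P^y_{\le t}$ whose $\alpha$-combination dominates $P^{\alpha x+(1-\alpha)y}_{\le t}$, its image by $(\xi,\tilde\xi)\mapsto(\xi_t,\tilde\xi_t)$ couples $P^x_t,P^y_t$. For $\varphi$ convex on $\R^d$, the map $\xi\mapsto\varphi(\xi_t)$ is convex and l.s.c. on paths, so the dominance passes to time $t$.
\end{itemize}

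For $(ii)\Rightarrow(iii)$, I would build the coupling step by step. Fix $x,y,\alpha$, set $z_0=\alpha x+(1-\alpha)y$, $x_0=x$, $y_0=y$, and use Strassen in its conditional (martingale) form. At step $k$, given current points $(x_k,y_k)$ and $z_k$, where $z_k$ is a martingale-type predecessor of $\alpha x_k+(1-\alpha)y_k$, I need a joint law of $(x_{k+1},y_{k+1})$ with Gaussian marginals $\nu^{x_k}_{t/n}$, $\nu^{y_k}_{t/n}$. The $\Theta$ of $(ii)$ at the point $(x_k,y_k,\alpha)$ gives one such law. However, $z_{k+1}$ must be drawn from $\nu^{z_k}_{t/n}$, and $z_k$ is in general not equal to $\alpha x_k+(1-\alpha)y_k$.

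The cleaner route is induction on $n$ in the form of Lemma \ref{lemcoc2marg}: convex order of $\delta\otimes$(path laws). Precisely, I would prove that for every convex $\varphi$ on $(\R^d)^{n+1}$,
\[
\int\varphi\,dP^{n,z}_{\mathrm{disc}}\le\int\varphi(\alpha\tilde x_{0:n}+(1-\alpha)\tilde y_{0:n})\,d\pi,
\]
with $\pi$ built as a Markov chain on $\R^{2d}$ whose transition from $(x_k,y_k)$ is the Gaussian ${\cal N}_{2d}$ with covariance $\begin{pmatrix}a(x_k)&\Theta\\\Theta^*&a(y_k)\end{pmatrix}t/n$. Then $\tilde\pi$ is the law of a chain $w_k=\alpha x_k+(1-\alpha)y_k$ with conditional Gaussian increments whose covariance dominates $a(w_k)t/n$ by $(ii)$.

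It then remains to show that a Gaussian Euler chain with covariance $a(w)s$ is dominated in the convex order by a chain with conditionally Gaussian increments of larger covariance. I would do this by backward induction on the value functions $u_k(w_{0:k})=\E[\varphi\mid w_{0:k}]$ for the Euler chain. If $u_{k+1}$ is convex, then $u_k(w_{0:k})=\int u_{k+1}(w_{0:k},w_k+\sqrt{s}a(w_k)^{1/2}g)\,dg$ is convex. This uses $(ii)$ again (the coupling-convexity of one Gaussian step, with a Jensen argument in the two-marginal form of Lemma \ref{lemcoc2marg} to handle the dependence on $w_{0:k}$), and Lemma \ref{lemcvord}(2) compares with the larger covariance. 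This propagation of convexity through one step is the step I expect to be the main obstacle. The path-dependence means that convexity of $w_k\mapsto\int u_{k+1}(\cdot,w_k+\text{noise}(w_k))$ is exactly Proposition \ref{propequivgauss} $(c)$ applied with an extra convex parameter. Finally, I transfer from discrete skeletons to $C([0,t],\R^d)$ through the linear map $i_n$, which preserves convexity of test functionals.

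For $(iii)\Rightarrow(iv)$, I would pass to the limit: the couplings $\pi_n$ are tight in ${\cal P}_1$ by the moment bound of Proposition \ref{prop:cveul}, and Corollary \ref{corcveul} gives $P^{n,\cdot}_{\le t}\to P^\cdot_{\le t}$ in ${\cal W}_\rho$. Along a subsequence, the limit $\pi$ lies in $\Pi(P^x_{\le t},P^y_{\le t})$, and for Lipschitz convex $\phi$ the inequality passes to the limit by \eqref{duw1}; Lemma \ref{lemcvord}(3) then gives the dominance.

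For $(vi)\Rightarrow(i)$, I would use a small-time expansion. Take $\varphi(x,z)=\psi(\zeta^*(z-x))$ with $\psi$ convex, say $\psi=|\cdot|$. The function $x\mapsto\E|\zeta^*(X^x_t-x)|$ behaves like $\sqrt{2t/\pi}\sqrt{\zeta^*a(x)\zeta}$ as $t\to0$: the scaled increment is asymptotically Gaussian by the martingale problem, with uniform integrability coming from the moment bounds. Convexity in $x$ then passes to the limit, which gives $(i)$.
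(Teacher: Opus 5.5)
Your proposal is correct, and its core is the paper's proof. The shared parts are the Gaussian coupled chain built from $\Theta(x_k,y_k,\alpha)$ and the propagation of convexity through one Euler step by applying $(ii)$ at arbitrary $(w_k,w'_k,\beta)$. Your backward induction on the value functions $u_k$ is the paper's forward induction with the convex function $\psi$, read in the other direction. The tightness and ${\cal W}_1$ limit for $(iii)\Rightarrow(iv)$ is also the same.

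Where you differ is in closing the cycle. You prove $(vi)\Rightarrow(i)$ from $t^{-1/2}\E|\zeta^*(X^x_t-x)|\to\sqrt{2\zeta^*a(x)\zeta/\pi}$, which is the alternative argument of Remark \ref{remcnlm}. The main proof instead shows $(v)\Rightarrow(ii)$: it expands the covariance of the coupling $Q^{x,y,\alpha}_t$ given by $(v)$ to first order in $t$, and extracts $\Theta$ as a limit of normalized cross-covariances. Your route is lighter (first absolute moments only, no coupling, no compactness extraction) and starts from the weaker-looking $(vi)$. It lands on $(i)$, so it needs Gu\'eant's implication $(f)\Rightarrow(a)$ to return to $(ii)$, but you use that implication anyway for $(i)\Rightarrow(iii)$. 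The paper's route has the merit of showing directly how the matrix $\Theta$ arises from a coupling. In both routes, the small-time step is most cleanly justified by the decomposition of Proposition \ref{propdec}, $X^x_t-x=a^{1/2}(x)W_t+o_{L^2}(\sqrt t)$, rather than by an unspecified asymptotic-normality argument from the martingale problem alone.

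One detail is missing. Defining your Markov chain on $\R^{2d}$ requires $(x_k,y_k)\mapsto\Theta(x_k,y_k,\alpha)$ to be measurable, which the paper obtains from the Arsenin--Kunugui theorem. You can in fact avoid the selection. Your backward induction uses $(ii)$ only pointwise, and it already shows that $x\mapsto\E[\varphi((X^{n,x}_{kt/n})_{k=0:n})]$ is convex for every convex $\varphi$ with affine growth. That is, $x\mapsto\delta_x\otimes{\cal L}((X^{n,x}_{kt/n})_{k=1:n})$ is convex for the convex order. Lemma \ref{lemcoc2marg} (via Strassen's theorem) then provides the coupling, which you push forward by the linear map $i_n$.

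Finally, $(vii)$ is attached more simply by $(iv)\Rightarrow(vii)$ via \eqref{COCimpC} and $(vii)\Rightarrow(vi)$ by projection. This avoids the extra limiting argument behind Remark \ref{remeqcvccv}.
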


\begin{Remark}\label{remmt}
  \begin{itemize}\item We could remove the continuity of $a$ from {\bf (A1)} and add it to $(iii)$, $(iv)$, $(v)$, $(vi)$ and $(vii)$ to keep the equivalence between all the assertions.
\item The one-marginal coupling-convexity $(v)$ turns out to be equivalent to the trajectorial coupling-convexity $(iv)$ while the trajectorial convexity  $(vii)$ is equivalent to the two-marginals convexity $(vi)$ but not to the one-marginal convexity of $x\mapsto P^x_t$. Indeed the characterization of the latter by \cite{LM06} is weaker than $(i)$.
   \item Let us discuss the addition of a time-homogeneous affine drift to the dynamics i.e. the replacement $\frac 12{\rm tr}\left(a(\xi_r)\nabla^2\varphi(\xi_r)\right)$ by $\frac 12{\rm tr}\left(a(\xi_r)\nabla^2\varphi(\xi_r)\right)+(\beta+b\xi_r)\cdot\nabla\varphi(\xi_r)$ for $(\beta,b)\in\R^d\times \R^{d\times d}$ in Definition \ref{defmp}. When this more general martingale problem is well-posed in the sense of this definition, the equivalence $(i)\Leftrightarrow (ii) \Leftrightarrow (iii)\Leftrightarrow (iv)\Leftrightarrow (v)\Leftrightarrow (vi)\Leftrightarrow (vii)$ still holds under {\bf (A1)}. Note however that the first and second conditions in Remark \ref{remwpmp} ensure well-posedness of the new martingale problem only when $b=0$, while well-posedness holds under the third condition with $\gamma$ constant.
   \end{itemize}
 \end{Remark}
 Let us now compare the distributions of the solutions to two $d$-dimensional time-homogeneous driftless stochastic differential equations.
\begin{Theorem}\label{thmcpsol}
  Let $\hat a,\tilde a:\R^d\to{\cal S}_+(d)$ and $\hat \mu,\tilde\mu\in{\cal P}_1(\R^d)$ be such that $\hat \mu\le_{\rm cx}\tilde\mu$. Assume that {\bf (A1)} and {\bf (A2)} are satisfied with $a$ replaced by $\hat a$ and $\tilde a$. Let $\hat P^x_{\le t}$ and $\tilde P^x_{\le t}$ respectively denote the solutions on $[0,t]$ of the corresponding martingale problems started from $x$. Then the existence of $a:\R^d\to {\cal S}_+(d)$ such that
  $$\forall \zeta\in\R^d,\;\R^d\ni x\mapsto \sqrt{\zeta^*a(x)\zeta}\mbox{ is convex and }\forall x\in\R^d,\; \hat a(x)\le a(x)\le \tilde a(x)$$
  implies that for each $t\ge0$, $\int_{\R^d}\hat P^x_{\le t}\hat\mu(dx)\le_{\rm cx}\int_{\R^d}\tilde P^x_{\le t}\tilde \mu(dx)$.
\end{Theorem}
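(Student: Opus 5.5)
Throughout, write $P^x_{\le t}$ for the solution associated with the intermediate $a$, and $P^{n,x}_{\le t}$, $\hat P^{n,x}_{\le t}$, $\tilde P^{n,x}_{\le t}$ for the Euler schemes built from $a$, $\hat a$, $\tilde a$. The plan is to work at the level of these Euler schemes and then pass to the limit with Corollary \ref{corcveul}. There is one technical point: $a$ itself is continuous by the remark after Proposition \ref{propeqmat}, and it has quadratic growth since $\hat a\le a\le\tilde a$, so {\bf (A1)} holds for $a$. However, we do not know {\bf (A2)} for $a$. We therefore avoid using $P^x_{\le t}$ and compare the discrete schemes directly. The target is, for each $n$,
$$\int \hat P^{n,x}_{\le t}\hat\mu(dx)\le_{\rm cx}\int P^{n,x}_{\le t}\hat\mu(dx)\le_{\rm cx}\int P^{n,x}_{\le t}\tilde\mu(dx)\le_{\rm cx}\int\tilde P^{n,x}_{\le t}\tilde\mu(dx).$$
We then let $n\to\infty$ in the outer terms. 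Corollary \ref{corcveul} gives ${\cal W}_1$ convergence for $\hat a$ and $\tilde a$ pointwise in $x$. The uniform moment bound of Proposition \ref{prop:cveul}, together with $\hat\mu,\tilde\mu\in{\cal P}_1$, lets us integrate in $x$ by dominated convergence for Lipschitz convex $\phi$. Lemma \ref{lemcvord} (3) then concludes.

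For the outer comparisons I use a backward induction on convex functionals. Fix $\phi:(\R^d)^{n+1}\to\R$ convex Lipschitz; by Lemma \ref{lemcvord}, composing with the linear map $i_n$ reduces path functionals to such $\phi$. Define
$$\phi_n=\phi,\qquad \phi_k(x_{0:k})=\int\phi_{k+1}(x_{0:k},z)\,\nu^{x_k}_{t/n}(dz),$$
where $\nu$ is the Gaussian kernel built from $a$. The key claim is that each $\phi_k$ is convex; this is exactly the content of $(i)\Rightarrow(iii)$. I would prove it directly. For two points and $\alpha$, take $\Theta$ from Proposition \ref{propeqmat}, couple the two Gaussians jointly as ${\cal N}_{2d}$ with cross covariance $\Theta t/n$, and add an independent Gaussian correction. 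That correction has covariance equal to the nonnegative gap in \eqref{defthet} and dominates ${\cal N}(\cdot,a(\alpha x+(1-\alpha)y)t/n)$ in convex order. Jensen in the last variable then gives convexity, with at most affine growth preserved.

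With the $\phi_k$ convex, the outer comparisons follow by induction. Let $\hat\phi_k$ be the analogous recursion with $\hat a$ instead of $a$. Since $\hat a(x_k)\le a(x_k)$, Lemma \ref{lemcvord} (2) applied to the convex function $\phi_{k+1}(x_{0:k},\cdot)$ yields $\hat\phi_k\le\phi_k$ by backward induction; monotonicity of the kernels handles the earlier steps. The same argument with $\tilde a\ge a$ gives $\phi_k\le\tilde\phi_k$. The middle comparison, $\int\phi_0\,d\hat\mu\le\int\phi_0\,d\tilde\mu$, follows from convexity of $\phi_0$ and $\hat\mu\le_{\rm cx}\tilde\mu$. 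Chaining these gives the displayed sequence of convex-order inequalities.

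The main obstacle is the convexity propagation step, which needs a careful coupling argument. It also needs integrability: the $\phi_k$ must have affine growth so that they can be integrated against $\hat\mu$ and $\tilde\mu$. This follows because $\sqrt{a}$ grows at most linearly and $\phi$ is Lipschitz.
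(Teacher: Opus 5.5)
Your proposal is correct and is essentially the paper's proof. You work only with Euler schemes, so no well-posedness is needed for the intermediate $a$. You get convexity of the one-step Gaussian integral from Proposition \ref{propeqmat} and Lemma \ref{lemcvord} \textbf{(2)} via the $2d$-dimensional Gaussian coupling, sandwich it between the $\hat a$ and $\tilde a$ versions, use $\hat\mu\le_{\rm cx}\tilde\mu$, and pass to the limit with Corollary \ref{corcveul}, the moment bound of Proposition \ref{prop:cveul} and dominated convergence.

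The only difference is organizational: you run a backward recursion on the $\phi_k$ through schemes built entirely from $a$, whereas the paper inducts forward on the number of steps, comparing $\hat\mu\prod\hat\nu$ with $\tilde\mu\prod\tilde\nu$ directly and inserting $\hat\psi\le\psi\le\tilde\psi$ only at the last step. One wording slip: what dominates ${\cal N}_d(\alpha x+(1-\alpha)y,a(\alpha x+(1-\alpha)y)t/n)$ in convex order is the image of the $2d$ coupling (the midpoint Gaussian plus the independent correction), not the correction itself.
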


\section{Proofs of the main results}\label{secproofs}
 The proof of Theorem \ref{mainthm} relies on the next proposition for the implication
  $(vi)\Rightarrow (ii)$ and is postponed until after the one of this proposition.
\begin{Proposition}\label{propdec}Assume {\bf(A1)} and {\bf(A2)}. For each $x\in\R^d$, there exists a family $(p^x_t)_{t\ge 0}$ of probability measures on $\R^d$ such that $\lim_{t\to 0+}\frac 1 t\int_{\R^d}|\hat x|^2p^x_t(d\hat x)=0$ and a family $(\P^x_t)_{t\ge 0}$ of elements of $\Pi(\nu_t^x
  ,p^x_t)$ such that for each $t\ge 0$, $P^x_t$ is the image of $\P^x_t$ by $\R^d\times \R^d\ni (\tilde x,\hat x)\mapsto \tilde x+\hat x\in\R^d$. Moreover, denoting by ${\mathbb C}^x_t\in{\cal S}_+(d)$ the  covariance matrix of the probability distribution $P^x_t$, we have $\lim_{t\to 0+}\frac 1t {\mathbb C}^x_t=a(x)$ for each $x\in\R^d$.
\end{Proposition}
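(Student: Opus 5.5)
Fix $x\in\R^d$ and let $(\xi_r)$ be the canonical process under $P^x_{\le t}$. The idea is to realize the decomposition explicitly: write $\xi_t=\tilde X_t+\hat X_t$, where $\tilde X_t\sim\nu^x_t={\cal N}_d(x,a(x)t)$ is the ``frozen-coefficient'' part and $\hat X_t$ is a remainder of order $o(\sqrt t)$ in $L^2$. Then $\P^x_t$ is the law of $(\tilde X_t,\hat X_t)$ and $p^x_t$ the law of $\hat X_t$.

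To build this, I will work on an enlarged probability space. By {\bf (A1)}, the martingale problem gives that $M_s=\xi_s-x$ is a continuous local martingale with quadratic variation $\int_0^s a(\xi_r)dr$. The moment bound of Proposition \ref{prop:cveul}, which passes to the limit $P^x_{\le t}$ by lower semicontinuity under ${\cal W}_\rho$ convergence (Corollary \ref{corcveul}), together with the quadratic growth of $a$, ensures that $M$ is a true square-integrable martingale.

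By the martingale representation theorem on an enlarged space (Doob's / \cite{StroVa} representation), there is a $d$-dimensional Brownian motion $W$ with $M_s=\int_0^s a(\xi_r)^{1/2}dW_r$. Set
$$\tilde X_t=x+a(x)^{1/2}W_t\sim{\cal N}_d(x,a(x)t)=\nu^x_t,\qquad \hat X_t=\int_0^t\big(a(\xi_r)^{1/2}-a(x)^{1/2}\big)dW_r .$$
Then $\xi_t=\tilde X_t+\hat X_t$, which gives the required image property. By Itô's isometry,
$$\frac1t\E|\hat X_t|^2=\frac1t\int_0^t\E\,\big|a(\xi_r)^{1/2}-a(x)^{1/2}\big|_F^2\,dr .$$

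The main step is showing this tends to $0$. The map $M\mapsto M^{1/2}$ is continuous on ${\cal S}_+(d)$ and $a$ is continuous, so $a(\xi_r)^{1/2}\to a(x)^{1/2}$ a.s. as $r\to0$ by path continuity. Domination comes from $|a(\xi_r)^{1/2}|_F^2={\rm tr}\,a(\xi_r)\le C(1+\sup_{s\le t}|\xi_s|^2)$, which is integrable by the moment bound. Dominated convergence then gives $\E|a(\xi_r)^{1/2}-a(x)^{1/2}|_F^2\to0$ as $r\to 0$, and hence the Cesàro mean above also tends to $0$.

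For the covariance, $\E[\xi_t]=x$, so ${\mathbb C}^x_t=\E[(M_t)(M_t)^*]=\int_0^t\E[a(\xi_r)]dr$. Dividing by $t$ and using the same continuity and domination argument gives ${\mathbb C}^x_t/t\to a(x)$. Alternatively, one can expand ${\mathbb C}^x_t=a(x)t+{\rm Cov}(\hat X_t)+2\,{\rm Cov}(\tilde X_t,\hat X_t)$ and bound the cross term by Cauchy--Schwarz as $O(\sqrt t)\,o(\sqrt t)$.

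The main obstacle I expect is the justification of the Brownian representation when $a$ may be degenerate. This requires the standard enlargement with an independent Brownian motion, after which $W$ is built from $M$ through a pseudo-inverse of $a^{1/2}$. The true-martingale integrability also needs care, but it is supplied by the moment estimates of Proposition \ref{prop:cveul}. At $t=0$ everything is trivial, taking $p^x_0=\delta_0$.
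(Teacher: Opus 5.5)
Your proposal is correct and follows the paper's proof essentially step by step. The paper also obtains a weak solution $X^x=x+\int_0^\cdot a^{1/2}(X^x_s)dW_s$ with law $P^x_{\le T}$ via Karatzas--Shreve, which is the same enlargement/representation argument you describe. It defines $\P^x_t$ as the law of $\left(x+a^{1/2}(x)W_t,\int_0^t(a^{1/2}(X^x_s)-a^{1/2}(x))dW_s\right)$, then concludes with It\^o's isometry and uniform integrability.

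The remaining differences are minor. You take the moment bound from Proposition~\ref{prop:cveul} and Corollary~\ref{corcveul} instead of BDG, localization and Gronwall. Your main covariance argument uses the identity ${\mathbb C}^x_t=\int_0^t\E[a(\xi_r)]dr$ directly. Your alternative route is the paper's; there the cross term should read ${\rm Cov}(\tilde X_t,\hat X_t)+{\rm Cov}(\tilde X_t,\hat X_t)^*$ rather than $2\,{\rm Cov}(\tilde X_t,\hat X_t)$, which does not affect the limit.
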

\noindent {\bf Proof of Proposition \ref{propdec}.}
By \cite[Proposition 4.11 Chapter 5]{KaShre}, there exists a weak solution to the stochastic differential equation
$$X^x_t=x+\int_0^ta^{1/2}(X^x_s)dW_s,\mbox{ where }(W_t)_{t\ge 0}\mbox{ is a $d$-dimensional Brownian motion},$$
such that for each $T\in[0,+\infty)$, $(X^x_r)_{r\in[0,T]}$ is distributed according to $P^x_{\le T}$. 
We define $\P^x_t$ as the distribution of $\left(x+a^{1/2}(x)W_t,\int_0^t\left(a^{1/2}(X^x_s)-a^{1/2}(x)\right)dW_s\right)$ so that its second marginal $p^x_t$ is the distribution of $\int_0^t\left(a^{1/2}(X^x_s)-a^{1/2}(x)\right)dW_s$. By It\^o's isometry, \begin{align}
             \int_{\R^d}|\hat x|^2p^x_t(d\hat x)=\int_0^t\E\left[{\rm tr}((a^{1/2}(X^x_s)-a^{1/2}(x))^2)\right]ds.\label{itois}\end{align}
By continuity of $a$, ${\rm tr}((a^{1/2}(X^x_s)-a^{1/2}(x))^2)$ converges a.s. to $0$ as $s\to 0+$. The quadratic growth assumption on $a$ in {\bf (A1)} combined with the Burkholder--Davis--Gundy inequality, a localization argument and Gronwall's lemma ensures that
$$\forall \rho,T\in[0,+\infty),\;\E\left[\sup_{t\in[0,T]}|X^x_t|^\rho\right]<\infty.$$
With the estimate$${\rm tr}((a^{1/2}(X^x_s)-a^{1/2}(x))^2)\le {\rm tr}(a(X^x_s)+a(x))\le C(1+|X^x_s|^2+|x|^2),$$
consequence of ${\rm tr}(AB)={\rm tr}(A^{1/2}BA^{1/2})\ge 0$ for $A,B\in{\cal S}_+(d)$ and of the growth assumption on $a$, we deduce uniform integrability of the family of random variables $\left({\rm tr}((a^{1/2}(X^x_s)-a^{1/2}(x))^2)\right)_{s\in[0,T]}$ for each $T\in (0,+\infty)$. Hence $\lim_{s\to 0+}\E\left[{\rm tr}((a^{1/2}(X^x_s)-a^{1/2}(x))^2)\right]=0$. With \eqref{itois}, we get that $\lim_{t\to 0+}\frac 1 t\int_{\R^d}|\hat x|^2p^x_t(d\hat x)=0$. Hence the covariance matrix $c_t$ of $p^x_t$ satisfies $\lim_{t\to 0+}\frac 1t c_t=0$. The covariance matrix of $\P^x_t$ reads $\left(\begin{array}{cc}a(x)t & \theta_t
   \\\theta^*_t &c_t
\end{array}\right)$ for some $\theta_t\in\R^{d\times d}$. Since for $i,j\in\{1,\cdots,d\}$, $\left(\begin{array}{cc}a_{ii}(x) & \frac 1t(\theta_t)_{ij}
   \\\frac 1t(\theta_t)_{ij} &\frac 1 t(c_t)_{jj}
\end{array}\right) \in{\cal S}_+(2)$ so that 
$(\frac 1t(\theta_t)_{ij})^2\le a_{ii}(x)(\frac 1t c_t)_{jj}$, we have $\lim_{t\to 0}\frac 1t\theta_t=0$. As ${\mathbb C}^x_t=a(x)t+\theta_t+\theta^*_t+c_t$, we conclude that $\lim_{t\to 0+}\frac 1t {\mathbb C}^x_t=a(x)$.\hfill$\Box$

\noindent {\bf Proof of Theorem \ref{mainthm}.} The second assertion holds since, under well-posedness of the martingale problem and finiteness of the first moment of its solutions, the implication $(iv)\Rightarrow (v)$ is clear and the equivalence $(v)\Leftrightarrow (vi)$ follows from Proposition \ref{propeqcvccv}. Since $(i)\Leftrightarrow (ii)$ by Proposition \ref{propeqmat}, to complete the proof of the first assertion, we will check that under {\bf (A1)} and {\bf (A2)},  $(ii)\Rightarrow (iii)$, $(iii)\Rightarrow (iv)$ and $(v)\Rightarrow (ii)$. Note that the equivalence with $(vii)$ follows from $(iv)\Rightarrow (vii)$, which is a consequence of \eqref{COCimpC}, and the obvious implication $(vii)\Rightarrow (vi)$.

{\bf Proof of $(ii)\Rightarrow (iii)$.}  For $(x,y,\beta)\in\R^d\times \R^d\times[0,1]$, the set \begin{align*}
   \bigg\{\Theta\in\R^{d\times d}:&\left(\begin{array}{cc} a(x) & \Theta \\ \Theta^* & a(y)\end{array}\right)\in{\cal S}_+(2d)\\&\mbox{ and }a(\beta x+(1-\beta)y)\le \beta^2 a(x)+\beta(1-\beta)(\Theta+\Theta^*)+(1-\beta)^2a(y)\bigg\}
\end{align*}
is a compact subset of $\R^{d\times d}$. By the Arsenin--Kunugui Theorem (see Theorem 18.18 \cite{Kechris}), we deduce that $(i)$ implies the existence of a measurable function $\Theta:\R^d\times\R^d\times[0,1]\ni(z,w,\beta)\mapsto \Theta(z,w,\beta)\in\R^{d\times d}$ such that for all $(z,w,\beta)\in\R^d\times\R^d\times[0,1]$, $\left(\begin{array}{cc} a(z) & \Theta(z,w,\beta) \\ \Theta^*(z,w,\beta) & a(w)\end{array}\right)\in{\cal S}_+(2d)$ and \begin{equation}
   a(\beta z+(1-\beta)w)\le \beta^2a(z)+\beta(1-\beta)(\Theta(z,w,\beta)+\Theta^*(z,w,\beta))+(1-\beta)^2a(w).\label{mizwb}
\end{equation}For $(z,w,\beta,s)\in\R^d\times \R^d\times[0,1]\times\R_+$, we denote by $N^\beta_s((z,w),(d\tilde z,d\tilde w))$ the Gaussian distribution ${\cal N}_{2d}\left(\left(\begin{array}{c}z\\  w\end{array}\right),\left(\begin{array}{cc}a(z) & \Theta(z,w,\beta)
   \\\Theta^*(z,w,\beta) & a(w)
\end{array}\right)s\right)$. Note that\begin{equation}
N^\beta_s((z,w),\cdot)\in\Pi(\nu_s^z,\nu_s^w).
\label{margN}
\end{equation}
The matrix inequality \eqref{mizwb} together with Lemma \ref{lemcvord} {\bf (2)} implies that                                                                                                                                                                                            \begin{equation}
  {\cal N}_d(\beta z+(1-\beta)w,a(\beta z+(1-\beta)w)s)=\nu_s^{\beta z+(1-\beta)w}\le_{\rm cx}((\tilde z,\tilde w)\mapsto \beta \tilde z+(1-\beta)\tilde w)\# N^\beta_s((z,w),(d\tilde z,d\tilde w))
\label{compmeas}.
 \end{equation}

Let now $(x,y,\alpha,t)\in\R^d\times \R^d\times[0,1]\times\R^*_+$ and $n\in\N^*$. 
We choose $((\tilde X^{n,x}_{kt/n},\tilde X^{n,y}_{kt/n})_{k=0:n})$ to be distributed according to \begin{equation}\delta_{(x,y)}(dx_0,dy_0)\prod_{k=0}^{n-1}N^\alpha_{t/n}((x_k,y_k),(dx_{k+1},dy_{k+1})).\label{loicoupl}
\end{equation}
By integration with respect to $(y_0,\cdots,y_n)\in(\R^d)^{n+1}$, we check using \eqref{margN} that $(\tilde X^{n,x}_{kt/n})_{k=0:n}$ is distributed according to $\delta_x(dx_0)\prod_{k=0}^{n-1}\nu_{t/n}^{x_k}(dx_{k+1})$ so that the law of $i_n\left((\tilde X^{n,x}_{kt/n})_{k=0:n}\right)$ is $P^{n,x}_{\le t}$. By symmetry, the laws of $(\tilde X^{n,y}_{kt/n})_{k=0:n}$ and $i_n\left((\tilde X^{n,y}_{kt/n})_{k=0:n}\right)$ are $\delta_y(dy_0)\prod_{k=0}^{n-1}\nu_{t/n}^{y_k}(dy_{k+1})$ and $P^{n,y}_{\le t}$. Let $Q^{n,x,y,\alpha}_{\le t}\in \Pi(P^{n,x}_{\le t},P^{n,y}_{\le t})$ denote the distribution of $\left(i_n\left((\tilde X^{n,x}_{kt/n})_{k=0:n}\right),i_n\left((\tilde X^{n,y}_{kt/n})_{k=0:n}\right)\right)$. We are going to prove by induction on $m\in\{0,\cdots,n\}$ that for $(X^{n,\alpha x+(1-\alpha)y}_{kt/n})_{k=0:n}$ distributed according to $\delta_{\alpha x+(1-\alpha)y}(dz_0)\prod_{k=0}^{n-1}\nu_{t/n}^{z_k}(dz_{k+1})$, we have \begin{equation*}
   (X^{n,\alpha x+(1-\alpha)y}_{kt/n})_{k=0:m}\le_{\rm cx}(\alpha\tilde X^{n,x}_{kt/n}+(1-\alpha)\tilde X^{n,y}_{kt/n})_{k=0:m}.\label{hyprecm}
\end{equation*} 

When $F:C([0,t],\R^d)\to\R$ is Lipschitz and convex, since $F\circ i_n:(\R^d)^{n+1}\to\R$ is convex as the composition of a convex function with a linear one, we deduce for $m=n$ that
\begin{align*}
   \int_{C([0,t],\R^d)}F((\tilde z_r)_{r\in [0,t]}&)P^{n,\alpha x+(1-\alpha)y}_{\le t}(d(\tilde z_r)_{r\in [0,t]})=\E[F(i_n((X^{n,\alpha x+(1-\alpha)y}_{kt/n})_{k=0:n}))]\\&\le \E[F(i_n((\alpha\tilde X^{n,x}_{kt/n}+(1-\alpha)\tilde X^{n,y}_{kt/n})_{k=0:n}))]\\&=\int_{C([0,t],\R^d)^2}F(\alpha (\tilde x_r)_{r\in [0,t]}+(1-\alpha) (\tilde y_r)_{r\in [0,t]})Q^{n,x,y,\alpha}_{\le t}(d(\tilde x_r)_{r\in [0,t]},d(\tilde y_r)_{r\in [0,t]}).
\end{align*}With Lemma \ref{lemcvord} {\bf (3)}, this implies ${\rm COC}(P^{n,\alpha x+(1-\alpha)y}_{\le t},P^{n, x}_{\le t},P^{n, y}_{\le t},\alpha)$.

The induction property 
is obvious for $m=0$. Let us suppose that it holds for $m\in\{0,\cdots,n-1\}$ and let $\varphi:(\R^d)^{m+2}\to\R$ be a convex function with at most affine growth. By the Markov property, we have $$\E\left[\varphi\left((X^{n,\alpha x+(1-\alpha)y}_{kt/n})_{k=0:m+1}\right)\right]=\E\left[\psi\left((X^{n,\alpha x+(1-\alpha)y}_{kt/n})_{k=0:m}\right)\right],$$ where $\psi(z_{0:m})=\int_{z_{m+1}\in\R^d}\varphi(z_{0:m},z_{m+1})\nu_{t/n}^{z_m}(dz_{m+1})$. For $(x_{0:m},y_{0:m},\beta)\in (\R^d)^{m+1}\times(\R^d)^{m+1}\times [0,1]$, we have, using the convexity of $\varphi$, the inequality \eqref{compmeas} and \eqref{margN},
\begin{align*}
   \psi(\beta x_{0:m}+&(1-\beta)y_{0:m})=\int_{v\in\R^d}\varphi(\beta x_{0:m}+(1-\beta)y_{0:m},v)\nu_{t/n}^{\beta x_m+(1-\beta) y_m}(dv)\\&\le \int_{(x_{m+1},y_{m+1})\in\R^d\times\R^d}\varphi(\beta x_{0:m+1}+(1-\beta)y_{0:m+1})N^\beta_{t/n}((x_m,y_m),(dx_{m+1},dy_{m+1}))\\&\le \int_{(x_{m+1},y_{m+1})\in\R^d\times\R^d}\left(\beta\varphi(x_{0:m+1})+(1-\beta)\varphi(y_{0:m+1})\right)N^\beta_{t/n}((x_m,y_m),(dx_{m+1},dy_{m+1}))\\&=\beta\psi(x_{0:m})+(1-\beta)\psi(y_{0:m}),
\end{align*}
so that $\psi$ is convex. Hence, using the induction property at rank $m$ for the first inequality then \eqref{compmeas} and the convexity of $\varphi$ for the second, we obtain 
\begin{align*}
  \E[&\varphi((X^{n,\alpha x+(1-\alpha)y}_{kt/n})_{k=0:m+1})]=\E[\psi((X^{n,\alpha x+(1-\alpha)y}_{kt/n})_{k=0:m})]\le \E[\psi((\alpha \tilde X^{n,x}_{kt/n}+(1-\alpha)\tilde X^{n,y}_{kt/n})_{k=0:m})]
  \\&=\E\left[\int_{\R^d}\varphi((\alpha \tilde X^{n,x}_{kt/n}+(1-\alpha)\tilde X^{n,y}_{kt/n})_{k=0:m},z_{m+1})\nu_{t/n}^{\alpha\tilde X^{n,x}_{mt/n }+(1-\alpha)\tilde X^{n,y}_{mt/n}}(dz_{m+1})\right]\\&\le \E\left[\int_{\R^d\times \R^d}\varphi((\alpha \tilde X^{n,x}_{kt/n}+(1-\alpha)\tilde X^{n,y}_{kt/n})_{k=0:m},\alpha x_{m+1}+(1-\alpha) y_{m+1})N^\alpha_{t/n}((\tilde X^{n,x}_{mt/n },\tilde X^{n,y}_{mt/n}),(dx_{m+1},dy_{m+1}))\right]\\&=\E[\varphi((\alpha \tilde X^{n,x}_{kt/n}+(1-\alpha)\tilde X^{n,y}_{kt/n})_{k=0:m+1})].
\end{align*}
Since $\varphi$ is an arbitrary convex function with affine growth, we deduce with Lemma \ref{lemcvord} {\bf (1)} that $(X^{n,\alpha x+(1-\alpha)y}_{kt/n})_{k=0:m+1}\le_{\rm cx}(\alpha \tilde X^{n,x}_{kt/n}+(1-\alpha)\tilde X^{n,y}_{kt/n})_{k=0:m+1}$.

{\bf Proof of $(iii)\Rightarrow (iv)$.}
Let $Q^{n,x,y,\alpha}_{\le t}\in\Pi(P^{n,x}_{\le t},P^{n,y}_{\le t})$ be such that its image $Q^{n,\alpha x+(1-\alpha)y}_{\le t}$ by $$C([0,t],\R^d)\times C([0,t],\R^d)\ni ((\tilde x_r)_{r\in [0,t]},(\tilde y_r)_{r\in [0,t]})\mapsto\alpha (\tilde x_r)_{r\in [0,t]}+(1-\alpha) (\tilde y_r)_{r\in [0,t]}\in C([0,t],\R^d)$$ satisfies  $P^{n,\alpha x+(1-\alpha)y}_{\le t}\le_{\rm cx}Q^{n,\alpha x+(1-\alpha)y}_{\le t}$. By Corollary \ref{corcveul}, $$\lim_{n\to\infty}{\cal W}_1(P^{n,x}_{\le t}, P^x_{\le t})+{\cal W}_1(P^{n,y}_{\le t}, P^y_{\le t})+{\cal W}_1(P^{n,\alpha x+(1-\alpha)y}_{\le t}, P^{\alpha x+(1-\alpha)y}_{\le t})=0.$$ Since $C([0,t],\R^d)$ is Polish, by the necessary condition in Prokhorov's theorem both sequences $(P^{n,x}_{\le t})_{n\in\N^*}$ and $(P^{n,y}_{\le t})_{n\in\N^*}$ are tight and so is $(Q^{n,x,y,\alpha}_{\le t})_{n\in\N^*}$. By the sufficient condition in Prokhorov's theorem, we can extract from $(Q^{n,x,y,\alpha}_{\le t})_{n\in\N^*}$ a subsequence still indexed by $n$ for notational simplicity such that $Q^{n,x,y,\alpha}_{\le t}$ converges weakly to some $Q^{x,y,\alpha}_{\le t}$. By continuity of the two projections from $C([0,t],\R^d)^2$ to  $C([0,t],\R^d)$, $Q^{x,y,\alpha}_{\le t}\in \Pi(P^{x}_{\le t},P^{y}_{\le t})$. Moreover, in view of \eqref{cvwr}, ${\cal W}_1(Q^{n,x,y,\alpha}_{\le t},Q^{x,y,\alpha}_{\le t})\to 0$ as $n\to\infty$. For $F:C([0,t],\R^d)\to\R$ Lipschitz and convex, 
\begin{align*}
   \int_{C([0,t],\R^d)}F((\tilde z_r&)_{r\in [0,t]})P^{n,\alpha x+(1-\alpha)y}_{\le t}(d(\tilde z_r)_{r\in [0,t]})\\&\le \int_{C([0,t],\R^d)^2}F(\alpha (\tilde x_r)_{r\in [0,t]}+(1-\alpha) (\tilde y_r)_{r\in [0,t]})Q^{n,x,y,\alpha}_{\le t}(d(\tilde x_r)_{r\in [0,t]},d(\tilde y_r)_{r\in [0,t]}).
\end{align*}
Since ${\cal W}_1(P^{n,\alpha x+(1-\alpha)y}_{\le t}, P^{\alpha x+(1-\alpha)y}_{\le t})+{\cal W}_1(Q^{n,x,y,\alpha}_{\le t},Q^{x,y,\alpha}_{\le t})\to 0$ as $n\to\infty$ and $F$ is Lipschitz continuous, the dual formulation \eqref{duw1} of ${\cal W}_1$ enables us to take the limit $n\to\infty$ in this inequality to conclude that it holds with $P^{n,\alpha x+(1-\alpha)y}_{\le t}$ and $Q^{n,x,y,\alpha}_{\le t}$ replaced by $P^{\alpha x+(1-\alpha)y}_{\le t}$ and $Q^{x,y,\alpha}_{\le t}$. We conclude with Lemma \ref{lemcvord} {\bf (3)}.

{\bf Proof of $(v)\Rightarrow (ii)$.} Assume $(v)$ and for   $(x,y,\alpha,t)\in\R^d\times \R^d\times[0,1]\times\R_+$, let $Q^{x,y,\alpha}_t\in\Pi(P^x_t,P^y_t)$ be such that its image $Q^{\alpha x+(1-\alpha)y}_t$ by $\R^d\times\R^d\ni (\tilde x,\tilde y)\mapsto\alpha \tilde x+(1-\alpha)\tilde y\in\R^d$ satisfies  $P^{\alpha x+(1-\alpha)y}_t\le_{\rm cx}Q^{\alpha x+(1-\alpha)y}_t$. Let ${\mathbb C}^{z}_t$ and $\widetilde{\mathbb C}^{\alpha x+(1-\alpha)y}_t$ denote the covariance matrices of the probability measures $P^z_t$ and $Q^{\alpha x+(1-\alpha)y}_t$. By Lemma \ref{lemcvord} {\bf (2)}, we have  \begin{align}
                                                                                                                                                                                                                                                                                                                                                                                                                                                                                                                                                                                                                              {\mathbb C}^{\alpha x+(1-\alpha)y}_t\le \widetilde{\mathbb C}^{\alpha x+(1-\alpha)y}_t.\label{compcoupl}\end{align}
Under the notation of Proposition \ref{propdec}, for $z\in\R^d$, we have $\P^z_t(d\tilde z,d\hat z)\delta_{\tilde z+\hat z}(d\check z)=P^z_t(d\check z)R^z_{t,\check z}(d\tilde z,d\hat z)$ for some probability kernel $\R^d\ni \check z\mapsto R^z_{t,\check z}\in{\cal P}(\R^d\times\R^d)$. Let $(\tilde X,\hat X,\tilde Y,\hat Y)$ be distributed according to $$\int_{(\check x,\check y)\in\R^d\times\R^d}Q^{x,y,\alpha}_t(d\check x,d\check y)R^{x}_{t,\check x}(d\tilde x,d\hat x)R^x_{t,\check y}(d\tilde y,d\hat y).$$ Then $\tilde X\sim{\cal N}_d(x,a(x)t)$, $\hat X\sim p^x_t$, $\tilde Y\sim {\cal N}_d(y,a(y)t)$, $\hat Y\sim p^y_t$ and $(\tilde X+\hat X,\tilde Y+\hat Y)\sim Q^{x,y,\alpha}_t$. Reasoning as in the end of the proof of Proposition \ref{propdec}, we check that 
${\mathbb C}\left(\begin{array}{c} 
  \tilde X+\hat X \\\tilde Y+\hat Y
\end{array}\right)=\left(\begin{array}{cc}a(x) & \Theta(t)\\
   \Theta^*(t)& a(y)
                                \end{array}\right)t+{o}(t)$ as $t\to 0+$ for some $\Theta(t)\in\R^{d\times d}$ such that the first term in the right-hand side is ${\mathbb C}\left(\begin{array}{c} \tilde X
   \\\tilde Y
\end{array}\right)$ and therefore the factor of $t$ in the right-hand side belongs to ${\cal S}_+(2d)$. Hence for $i,j\in\{1,\cdots,d\}$, $|\Theta_{ij}(t)|\le \sqrt{a_{ii}(x)a_{jj}(y)}$ and by compactness, we can find a sequence $(t_n)_{n}$ of positive times going to $0$ as $n\to\infty$ and such that $\Theta(t_n)$ converges to some limit $\Theta$. Of course, $\left(\begin{array}{cc}a(x) & \Theta\\
   \Theta^*& a(y)
                                \end{array}\right)\in{\cal S}_+(2d)$. Moreover, since $(\tilde X+\hat X,\tilde Y+\hat Y)\sim Q^{x,y,\alpha}_t$, $\alpha(\tilde X+\hat X)+(1-\alpha)(\tilde Y+\hat Y)\sim Q^{\alpha x+(1-\alpha)y}_t$ and the covariance matrix $\widetilde{\mathbb C}^{\alpha x+(1-\alpha)y}_t$ of this probability measure satisfies \begin{align*}
               \widetilde{\mathbb C}^{\alpha x+(1-\alpha)y}_t&=\left(\begin{array}{cc}\alpha I_d &
   (1-\alpha)I_d\end{array}\right){\mathbb C}\left(\begin{array}{c} 
  \tilde X+\hat X \\\tilde Y+\hat Y
\end{array}\right)\left(\begin{array}{c}\alpha I_d \\
   (1-\alpha)I_d\end{array}\right)\\&=\left(\alpha^2a(x)+\alpha(1-\alpha)(\Theta(t)+\Theta^*(t))+(1-\alpha)^2a(y)\right)t+o(t).                  
                                \end{align*}
With Proposition \ref{propdec} then \eqref{compcoupl}, we deduce that
       for $t>0$                       \begin{align*}
           a(\alpha x+(1-\alpha)y)+o(1)&=\frac 1 t{\mathbb C}^{\alpha x+(1-\alpha)y}_t\\&\le\frac 1 t  \widetilde{\mathbb C}^{\alpha x+(1-\alpha)y}_t=\alpha^2a(x)+\alpha(1-\alpha)(\Theta(t)+\Theta^*(t))+(1-\alpha)^2a(y)        +o(1).          
                              \end{align*}
Writing this inequality for $t=t_n$ and letting $n\to\infty$, we conclude that $
a(\alpha x+(1-\alpha)y)\le \alpha^2a(x)+\alpha(1-\alpha)(\Theta+\Theta^*)+(1-\alpha)^2a(y)$.

\hfill$\Box$
\begin{Remark}\label{remcnlm} Let us give a direct proof of $(vi)\Rightarrow (i)$ under {\bf (A1)} and {\bf (A2)}.
 By Proposition \ref{propdec}, for $z,\zeta\in\R^d$, \begin{align*}
   &\frac 1{\sqrt{t}}\int_{\R^d}|\zeta^*(w-z)|P_t^{z}(dw)=\frac 1{\sqrt{t}}\int_{\R^d\times \R^d}|\zeta^*(\tilde z+\hat z-z)|\P^z_t(d\tilde z,d\hat z)\\&\frac 1{\sqrt{t}}\int_{\R^d\times \R^d}|\zeta^*(\tilde z-z)|\P^z_t(d\tilde z,d\hat z)=\frac 1{\sqrt{t}}\int_{\R^d}|\zeta^*(\tilde z-z)|\nu^z_t(d\tilde z)=\sqrt{2\zeta^*a(z)\zeta /\pi}\\&\frac 1{\sqrt{t}}\int_{\R^d\times \R^d}|\zeta^*\hat z|\P^z_t(d\tilde z,d\hat z)=\frac 1{\sqrt{t}}\int_{\R^d}|\zeta^*\hat z|p^z_t(d\hat z)
                                                .
                                              \end{align*} Hence, by the triangle and Cauchy--Schwarz inequalities,
\begin{align*}
   \left|\frac 1{\sqrt{t}}\int_{\R^d}|\zeta^*(w-z)|P_t^{z}(dw)-\sqrt{2\zeta^*a(z)\zeta /\pi}\right|&=\frac 1{\sqrt{t}}\left|\int_{\R^d\times \R^d}(|\zeta^*(\tilde z+\hat z-z)|-|\zeta^*(\tilde z-z)|)\P^z_t(d\tilde z,d\hat z)\right|\\&\le\frac 1{\sqrt{t}}\int_{\R^d}|\zeta^*\hat z|p^z_t(d\hat z)\le |\zeta|\sqrt{\frac 1 t\int_{\R^d}|\hat z|^2p^z_t(d\hat z)},
\end{align*}
where the right-hand side converges to $0$ as $t\to 0+$, still by Proposition \ref{propdec}.

Let $(x,y,\alpha)\in\R^d\times \R^d\times[0,1]$. Under $(vi)$, we have $P_{0,t}^{\alpha x+(1-\alpha)y}\le_{\rm cx}\alpha P_{0,t}^x+(1-\alpha)P_{0,t}^y$. By convexity of $\R^d\times\R^d\ni(z,w)\mapsto \frac 1{\sqrt{t}}|\zeta^*(w-z)|$, we deduce that
\begin{align}
   \frac 1{\sqrt{t}}\int_{\R^d}|\zeta^*(w-(\alpha x+(1-\alpha)y))|P_t^{\alpha x+(1-\alpha)y}(dw)\le &\frac \alpha{\sqrt{t}} \int_{\R^d}|\zeta^*(w-x)|P_t^{x}(dw)\notag\\&+\frac{1-\alpha}{\sqrt{t}}\int_{\R^d}|\zeta^*(w-y)|P_t^{y}(dw).\label{inegzet}
\end{align}
We conclude that $\sqrt{\zeta^*a(\alpha x+(1-\alpha)y)\zeta}\le \alpha\sqrt{\zeta^*a(x)\zeta}+(1-\alpha)\sqrt{\zeta^*a(y)\zeta}$ by taking the limit $t\to 0+$ in this inequality multiplied by $\sqrt{\pi/2}$.

One easily checks in the same way that 
   $P_{t}^{\alpha x+(1-\alpha)y}\le_{\rm cx}\alpha P_{t}^x+(1-\alpha)P_{t}^y$ for each $t\ge 0$ implies $$\forall \zeta\in\R^d\mbox{ such that }\zeta^*(y-x)=0,\;\sqrt{\zeta^*a(\alpha x+(1-\alpha)y)\zeta}\le \alpha\sqrt{\zeta^*a(x)\zeta}+(1-\alpha)\sqrt{\zeta^*a(y)\zeta}.$$
   Indeed, when $\zeta^*(y-x)=0$, $\zeta^*(\alpha x+(1-\alpha)y)=\zeta^*x=\zeta^*y$  so that \eqref{inegzet} still holds by convexity of $\R^d\ni w\mapsto \frac 1{\sqrt{t}}|\zeta^*(w-x)|$.

Note that, with the notations of the proof of $(v)\Rightarrow (ii)$ in Theorem \ref{mainthm},
\begin{align}
   P_{0,t}^{\alpha x+(1-\alpha)y}\le_{\rm cx}\alpha P_{0,t}^x+(1-\alpha)P_{0,t}^y\Rightarrow {\mathbb C}^{\alpha x+(1-\alpha)y}_t\le \alpha {\mathbb C}^x_t+(1-\alpha){\mathbb C}^y_t\label{compsanscoupl}.
\end{align}For the convex function $\R^d\times\R^d\ni(z,w)\mapsto (\zeta^*(w-z))^2$ where $\zeta\in\R^d$, the convex order inequality indeed implies that $\zeta^*{\mathbb C}^{\alpha x+(1-\alpha)y}_t\zeta\le \alpha \zeta^*{\mathbb C}^x_t\zeta+(1-\alpha)\zeta^*{\mathbb C}^y_t\zeta$.
The gap between the right-hand sides of the covariance inequalities in \eqref{compsanscoupl} and \eqref{compcoupl} reads $$\alpha {\mathbb C}^x_t+(1-\alpha){\mathbb C}^y_t-\widetilde{\mathbb C}^{\alpha x+(1-\alpha)y}_t=\alpha(1-\alpha)\widehat {\mathbb C}^{x,y,\alpha}_t,$$ where $\widehat{\mathbb C}^{x,y,\alpha}_t\in{\cal S}_+(d)$ is the covariance matrix of the image of the probability measure $Q^{x,y,\alpha}_t(d\tilde x, d\tilde y)$ by $\R^d\times\R^d\ni (\tilde x,\tilde y)\mapsto \tilde y-\tilde x\in\R^d$
. This symmetric positive semi-definite gap explains why we are not able to directly recover $(ii)$ from $(vi)$. Finally,

\begin{align*}
&P^{\alpha x+(1-\alpha)y}_t\le_{\rm cx}\alpha P^x_t+(1-\alpha)P^y_t\Rightarrow {\mathbb C}^{\alpha x+(1-\alpha)y}_t\le \alpha {\mathbb C}^x_t+(1-\alpha){\mathbb C}^y_t+\alpha(1-\alpha)(x-y)(x-y)^*
\end{align*}
where the complementary contribution $\alpha(1-\alpha)(x-y)(x-y)^*$ is also symmetric positive semi-definite.\end{Remark}

\begin{Remark}\label{remcouple}Note that under {\bf (A1)} and {\bf (A2)}, one can prove directly that $(v)\Rightarrow (iv)$ without going through the other assertions. Indeed, for $n\in\N^*$and $(z,w,\beta)\in\R^d\times\R^d\times[0,1]$, \begin{align*}
   \bigg\{&Q\in\Pi(P^z_{t/n},P^w_{t/n})\mbox{ such that }P^{\beta z+(1-\beta)w}_{t/n}\mbox{ is smaller in the convex order than the image of $Q$ by}\\&\R^d\times\R^d\ni(\tilde x,\tilde y)\mapsto\beta \tilde x +(1-\beta)\tilde y\in\R^d\bigg\}
 \end{align*}
is a compact subset of ${\cal P}(\R^d\times\R^d)$ endowed with the weak convergence topology. By the Arsenin--Kunugui theorem (see \cite[Theorem 18.18]{Kechris}), we deduce that under $(v)$ there exists a measurable function $\R^d\times\R^d\times[0,1]\ni(z,w,\beta)\mapsto Q^{z,w,\beta}_{t/n}$ such that $Q^{z,w,\beta}_{t/n}$ belongs to this set. For $(x,y,\alpha)\in\R^d\times\R^d\times[0,1]$, choosing $((\tilde X^{n,x}_{kt/n},\tilde X^{n,y}_{kt/n})_{k=0:n})$ distributed according to \begin{equation}\delta_{(x,y)}(dx_0,dy_0)\prod_{k=0}^{n-1}Q^{x_k,y_k,\alpha}_{t/n}(dx_{k+1},dy_{k+1})
\end{equation}
instead of \eqref{loicoupl}, one can conclude with a similar reasoning as in the above proof of $(ii)\Rightarrow (iii)$ in Theorem \ref{mainthm} that for $(X^{n,\alpha x+(1-\alpha)y}_{kt/n})_{k=0:n}$ distributed according to $\delta_{\alpha x+(1-\alpha)y}(dz_0)\prod_{k=0}^{n-1}P_{t/n}^{z_k}(dz_{k+1})$\begin{equation*}
   (X^{n,\alpha x+(1-\alpha)y}_{kt/n})_{k=0:n}\le_{\rm cx}(\alpha\tilde X^{n,x}_{kt/n}+(1-\alpha)\tilde X^{n,y}_{kt/n})_{k=0:n}.
 \end{equation*}
 Note that $(\tilde X^{n,x}_{kt/n})_{k=0:n}$ and $(\tilde X^{n,y}_{kt/n})_{k=0:n}$ are respectively distributed according to $\delta_x(dx_0)\prod_{k=0}^{n-1}P_{t/n}^{x_k}(dx_{k+1})$ and $\delta_y(dy_0)\prod_{k=0}^{n-1}P_{t/n}^{y_k}(dy_{k+1})$.
By the Markov property 
 , for $z\in \R^d$, $\delta_{z}(dz_0)\prod_{k=0}^{n-1}P_{t/n}^{z_k}(dz_{k+1})$ is the image of $P^z_{\le t}$ by $C([0,t],\R^d)\ni \xi\mapsto (\xi_{kt/n})_{k=0:n}$. For $\xi\in C([0,t],\R^d)$ and $i_n$ defined in \eqref{delfin}, $$\sup_{s\in[0,t]}|\xi_s-i_n(\xi)(s)|\le\sup_{0\le r\le s\le (r+t/n)\wedge t}|\xi_r-\xi_s| \le 2\sup_{s\in[0,t]}|\xi_s|$$ where the modulus of continuity $\sup_{0\le r\le s\le (r+t/n)\wedge t}|\xi_r-\xi_s|$ goes to $0$ as $n\to\infty$ and the right-hand side is integrable with respect to $P^z_{\le t}$ by the quadratic growth assumption on $a$ in {\bf (A1)}. As a consequence, with $i_n\#\delta_{z}(dx_0)\prod_{k=0}^{n-1}P^{x_{k}}_{t/n}(dx_{k+1})$ denoting the image of the probability measure $\delta_{z}(dx_0)\prod_{k=0}^{n-1}P^{x_{k}}_{t/n}(dx_{k+1})$ by the linear interpolation operator $i_n$, 
\begin{equation}
   \forall z\in\R^d,\;\lim_{n\to\infty}{\cal W}_1\left(i_n\#\delta_{z}(dx_0)\prod_{k=0}^{n-1}P^{x_{k}}_{t/n}(dx_{k+1}),P^{z}_{\le t}\right)=0.\label{cvpz}
 \end{equation}
 We deduce $(iv)$ as in the proof of $(iii)\Rightarrow (iv)$ in Theorem \ref{mainthm}.\end{Remark}
\begin{Remark}
  To prove directly that $(vi)\Rightarrow (vii)$ under {\bf (A1)} and {\bf (A2)}, one can combine \eqref{cvpz} with the fact that for  $m\in\{0,\cdots,n\}$ and $(x,y,\alpha)\in\R^d\times\R^d\times[0,1]$, $$\delta_{\alpha x+(1-\alpha)y}(dx_0)\prod_{k=0}^{m-1}P^{x_{k}}_{t/n}(dx_{k+1})\le_{\rm cx}\alpha \delta_{x}(dx_0)\prod_{k=0}^{m-1}P^{x_{k}}_{t/n}(dx_{k+1})+(1-\alpha) \delta_{y}(dx_0)\prod_{k=0}^{m-1}P^{x_{k}}_{t/n}(dx_{k+1}).$$
  This inequality can be proved by induction on $m\in\{0,\cdots,n\}$. The hypothesis clearly holds for $m=0$. For a convex function $\varphi:(\R^d)^{m+2}\to\R$, Lemma \ref{lemcoc2marg} and the equality $P^z_{0,t/n}(dx_m,dx_{m+1})=\delta_z(dx_m)P^z_{t/n}(dx_{m+1})$ ensure that under $(v)$, the function
  \begin{align*}
   (\R^d)^{m+1}\ni x_{0:m}\mapsto &\int_{x_{m+1}\in\R^d}\varphi(x_{0:m+1})P^{x_{m}}_{t/n}(dx_{m+1})\\&=\int_{y_{0:m+1}\in(\R^d)^{m+2}}\varphi(y_{0:m+1})\delta_{x_{0:m}}(dy_{0:m})P^{x_{m}}_{t/n}(dy_{m+1})
  \end{align*}
  is convex so that the induction hypothesis at rank $m+1$ follows from the hypothesis at rank $m$.

\end{Remark}

\begin{Remark} If \eqref{defthet} holds with $\Theta=\Theta(x,y,\alpha)$ for some measurable function $\R^d\times\R^d\times[0,1]\ni(x,y,\alpha)\mapsto \Theta(x,y,\alpha)\in\R^{d\times d}$ continuous in its first two variables, then under {\bf (A1)} and {\bf (A2)}, $\R^d\times\R^d\ni(x,y)\mapsto A(x,y,\alpha)=\left(\begin{array}{cc} a(x) & \Theta(x,y,\alpha) \\ \Theta^*(x,y,\alpha) & a(y)\end{array}\right)\in\R^{2d\times 2d}$ is continuous and $(iv)$ holds with the coupling measure $P^{x,y,\alpha}_{\le t}\in\Pi(P^{x}_{\le t},P^{y}_{\le t})$ (the image of which by $C([0,t],\R^d)\times C([0,t],\R^d)\ni (\tilde x,\tilde y)\mapsto \alpha \tilde x+(1-\alpha)\tilde y$ dominates $P^{\alpha x+(1-\alpha)y}_{\le t}$ in the convex order) solving the martingale problem: $P^{x,y,\alpha}_{\le t}(\upsilon_0=(x,y))=1$ and
  $$\forall \varphi:\R^{2d}\to\R\;{\cal C}^\infty\mbox{ with compact support},\;\varphi(\upsilon_s)-\frac 12\int_0^s{\rm tr}(A(\upsilon_r,\alpha)\nabla^2\varphi(\upsilon_r))dr\mbox{ is a $P^{x,y,\alpha}_{\le t}$-martingale},$$
  where $(\upsilon_t)_{t\ge 0}$ denotes the canonical process in ${\cal C}([0,t],\R^{2d})$. 
\end{Remark}\noindent {\bf Proof of Theorem \ref{thmcpsol}.}
For $\nu^z_t={\cal N}_d(z,a(z)t)$, $\hat\nu^z_t={\cal N}_d(z,\hat a(z)t)$ and $\tilde\nu^z_t={\cal N}_d(z,\tilde a(z)t)$ where $(t,z)\in\R_+\times\R^d$, we have $
\hat\nu^z_t\le_{\rm cx}\nu^z_t\le_{\rm cx}\tilde \nu^z_t$. For $n\in\N^*$, let us check by induction on $m\in\{0,\cdots,n\}$ that $\hat\mu(dx_0)\prod_{k=0}^{m-1}\hat\nu^{x_k}_{t/n}(dx_{k+1})\le_{\rm cx} \tilde\mu(dx_0)\prod_{k=0}^{m-1}\tilde\nu^{x_k}_{t/n}(dx_{k+1})$. It is easy to check that the two probability measures in this inequality belong to ${\cal P}_1((\R^d)^{m+1})$ by performing some induction on $m$ using $\hat\mu,\tilde\mu\in{\cal P}_1(\R^d)$ and the quadratic growth property of $\hat a,\tilde a$ (or using the more refined estimate in Proposition \ref{prop:cveul}). The induction hypothesis holds for $m=0$. Assume that it holds at rank $m$ and let $\varphi:(\R^d)^{m+2}\to\R$ be convex. We set $\psi(x_{0:m})=\int_{x_{m+1}\in\R^d}\varphi(x_{0:m+1})\nu^{x_m}_{t/n}(dx_{m+1})$ and define $\hat\psi$ and $\tilde \psi$ in the same way by replacing $\nu^{x_m}_{t/n}$ by $\hat \nu^{x_m}_{t/n}$ and $\tilde \nu^{x_m}_{t/n}$ respectively. The previous convex order comparison yields $\hat\psi\le\psi\le \tilde \psi$. 
Moreover, by the implication $(i)\Rightarrow (ii)$ and the proof of $(ii)\Rightarrow(iii)$ in Theorem \ref{mainthm}, the function $\psi$ is convex. Using the definitions of $\hat\psi$ and $\tilde \psi$ for the equalities, the inequalities $\hat\psi\le\psi$ and $\psi\le\tilde\psi$ for the first and third inequalities and the convexity of $\psi$ together with the induction hypothesis at rank $m$ for the second inequality, we obtain that
\begin{align*}
   \int_{(\R^d)^{m+2}}\varphi(x_{0:m+1})\hat\mu(dx_0)\prod_{k=0}^m\hat\nu^{x_k}_{t/n}(dx_{k+1})&=
\int_{(\R^d)^{m+1}}\hat\psi(x_{0:m})\hat\mu(dx_0)\prod_{k=0}^{m-1}\hat\nu^{x_k}_{t/n}(dx_{k+1})\\&\le\int_{(\R^d)^{m+1}}\psi(x_{0:m})\hat\mu(dx_0)\prod_{k=0}^{m-1}\hat\nu^{x_k}_{t/n}(dx_{k+1})\\&\le\int_{(\R^d)^{m+1}}\psi(x_{0:m})\tilde\mu(dx_0)\prod_{k=0}^{m-1}\tilde\nu^{x_k}_{t/n}(dx_{k+1})\\&\le\int_{(\R^d)^{m+1}}\tilde\psi(x_{0:m})\tilde\mu(dx_0)\prod_{k=0}^{m-1}\tilde\nu^{x_k}_{t/n}(dx_{k+1})\\&= \int_{(\R^d)^{m+2}}\varphi(x_{0:m+1})\tilde\mu(dx_0)\prod_{k=0}^m\tilde\nu^{x_k}_{t/n}(dx_{k+1}).\end{align*}
We conclude that the induction hypothesis holds at rank $m+1$. For $x\in\R^d$, let $\hat P^{n,x}_{\le t}$ and $\tilde P^{n,x}_{\le t}$ denote the respective images of $\delta_x(dx_0)\prod_{k=0}^{n-1}\hat\nu^{x_k}_{t/n}(dx_{k+1})$ and $\delta_x(dx_0)\prod_{k=0}^{n-1}\tilde\nu^{x_k}_{t/n}(dx_{k+1})$ by the linear interpolation operator $i_n$ defined in \eqref{delfin}. For $m=n-1$ and $\varphi=F\circ i_n$ where $F:C([0,t],\R^d)\to\R$ is convex, the inequality between the extreme sides of the last equation reads
$$\int_{\R^d\times C([0,t],\R^d)}F(\xi)\hat P^{n,x}_{\le t}(d\xi)\hat\mu(dx)\le \int_{\R^d\times C([0,t],\R^d)}F(\xi)\tilde P^{n,x}_{\le t}(d\xi)\tilde\mu(dx).$$
We have ${\cal W}_1\left(\int_{\R^d}\hat P^{n,x}_{\le t}\hat\mu(dx),\int_{\R^d}\hat P^{x}_{\le t}\hat\mu(dx)\right)\le \int_{\R^d}{\cal W}_1(\hat P^{n,x}_{\le t},\hat P^{x}_{\le t})\hat\mu(dx)$, where Corollary \ref{corcveul} combined with the estimate in Proposition \ref{prop:cveul} and the dominated convergence theorem ensure that the right-hand side goes to $0$ as $n\to\infty$. In the same way, $\lim_{n\to\infty}{\cal W}_1\left(\int_{\R^d}\tilde P^{n,x}_{\le t}\tilde\mu(dx),\int_{\R^d}\tilde P^{x}_{\le t}\tilde\mu(dx)\right)=0$. When $F$ is moreover Lipschitz, we can thus take the limit $n\to\infty$ in the previous inequality to obtain that
$$\int_{\R^d\times C([0,t],\R^d)}F(\xi)\hat\mu(dx)\hat P^{x}_{\le t}(d\xi)\le \int_{\R^d\times C([0,t],\R^d)}F(\xi)\tilde\mu(dx)\tilde P^{x}_{\le t}(d\xi).$$
We conclude with Lemma \ref{lemcvord} {\bf (3)}.
\hfill$\Box$

\noindent {\bf Proof of Proposition \ref{prop:cveul}.} Under {\bf (A1)}, $\R^d\ni x\mapsto a^{1/2}(x)\in\R^{d\times d}$ is continuous and such that $\sup_{x\in\R^d}\frac{|a^{1/2}(x)|}{1+|x|}<\infty$. Let $(W_s)_{s\ge 0}$ be a $d$-dimensional Brownian motion. For $x\in\R^d$, let $(X^{n,x}_s)_{s\in[0,t]}$ be initialized by  $X^{n,x}_0=x$ and defined inductively for $k\in\{0,\cdots,n-1\}$ by
\begin{equation}
   \forall s\in [kt/n,(k+1)t/n],\;X^{n,x}_s=X^{n,x}_{kt/n}+a^{1/2}(X^{n,x}_{kt/n})(W_s-W_{kt/n}).\label{euler}
 \end{equation}
 Since $X^{n,x}_{(k+1)t/n}=X^{n,x}_{kt/n}+a^{1/2}(X^{n,x}_{kt/n})(W_{(k+1)t/n}-W_{kt/n})$ with $(W_{(k+1)t/n}-W_{kt/n})$ independent of $(X^{n,x}_{\ell t/n})_{\ell=0:k}$, $(X^{n,x}_{kt/n})_{k=0:n}$ is distributed according to $\delta_x(dx_0)\prod_{k=0}^{n-1}\nu_{t/n}^{x_k}(dx_{k+1})$ so that $i_n\left((X^{n,x}_{kt/n})_{k=0:n}\right)$ is distributed according to $P^{n,x}_{\le t}$. Moreover, using the independence between $X^{n,x}_{kt/n}$ and $(W_{(k+1)t/n}-W_{kt/n})$, one easily checks by induction on $k$ that for each $\rho\in [1,+\infty)$, $\max_{k=0:n}\E[|X^{n,x}_{kt/n}|^\rho]<\infty$.
For $\rho\in[2,+\infty)$ and $s\in[0,t]$, since $$\sup_{r\in[0,s]}|X^{n,x}_r|^\rho \le 2^{\rho-1}\left(|x|^\rho+\sup_{r\in[0,s]}\left|\int_0^r
  a^{1/2}(X^{n,x}_{\lfloor n u/t\rfloor t/n})dW_u\right|^\rho\right),$$ by the Burkholder-Davis-Gundy inequality and the growth assumption on $a$,
$$\E\left[\sup_{r\in[0,s]}|X^{n,x}_r|^\rho\right]\le C\left(1+|x|^\rho+\int_0^s\E\left[\sup_{r\in[0,u]}|X^{n,x}_r|^\rho\right]du
\right),$$
for some finite constant $C$ only depending on $d,t,\rho$ and $\sup_{x\in\R^d}\frac{|a(x)|}{1+|x|^2}$. With Gronwall's lemma and the inequality $\sup_{s\in[0,t]}|i_n(x_{0:n})(s)|\le \max_{k=0:n}|x_k|$, we conclude that for each $\rho\in[1,+\infty)$,
\begin{align}
   \sup_{n\in\N^*,x\in\R^d}\frac{1}{1+|x|^\rho}\int_{C([0,t],\R^d)}\sup_{s\in[0,t]}|\xi_s|^\rho P^{n,x}_{\le t}(d\xi)\le\sup_{n\in\N^*,x\in\R^d}\frac{1}{1+|x|^\rho}\E\left[\sup_{s\in[0,t]}|X^{n,x}_s|^\rho\right]\le Ce^{Ct}.\label{estimom}
\end{align}
For $0\le r\le u\le s\le t$, writing $X^{n,x}_u-X^{n,x}_r=\int_r^u
a^{1/2}(X^{n,x}_{\lfloor n v/t\rfloor t/n})dW_v$ and using the Burkholder-Davis-Gundy inequality and the growth assumption on $a$, we obtain that
\begin{equation}
   \forall \rho\in[2,+\infty),\;\exists C<\infty,\;\forall n\in\N^*,\;\forall 0\le r\le s\le t,\;\E\left[\sup_{u\in[r,s]}|X^{n,x}_u-X^{n,x}_r|^\rho\right]\le C(s-r)^{\rho/2}.\label{majaccroiss}
\end{equation}
With Kolmogorov's tightness criterion (see for instance \cite[Exercise 4.11 Chapter 2]{KaShre}), we deduce that the sequence $({\cal L}((X^{n,x}_s)_{s\in[0,t]}))_{n\in\N^*}$ of laws of $(X^{n,x}_s)_{s\in[0,t]}$ is tight. With the uniform integrability consequence of the estimate $\sup_{n\in\N^*}\E\left[\sup_{s\in[0,t]}|X^{n,x}_s|^{\rho+1}\right]<\infty$ and \eqref{cvwr}, we deduce that this sequence is relatively compact in ${\cal P}_\rho(C([0,t],\R^d))$ for any $\rho\in [2,+\infty)$ and even for any $\rho\in [1,+\infty)$ since ${\cal W}_\rho$ is non-decreasing in $\rho$.
 Since for $0\le k\le n-1$,
 \begin{align*}
   \sup_{s\in[k t/n,(k +1)t/n]}\Big|i_n\left((X^{n,x}_{\ell t/n})_{\ell =0:n}\right)(s)-X^{n,x}_s\Big|\le 2\sup_{s\in[kt/n,(k+1)t/n]}|X^{n,x}_s-X^{n,x}_{kt/n}|,
 \end{align*}
 we obtain using \eqref{majaccroiss} that for $\rho\in[2,+\infty)$,
 \begin{align}
   \E\left[\sup_{s\in[0,t]}\Big|i_n\left((X^{n,x}_{k t/n})_{k =0:n}\right)(s)-X^{n,x}_s\Big|^\rho\right]\le 2^\rho\E\left[\sum_{k=0}^{n-1}\sup_{s\in[kt/n,(k+1)t/n]}|X^{n,x}_s-X^{n,x}_{kt/n}|^\rho\right]&\le 
 Cn^{1-\rho/2}  .\label{majosupac}
 \end{align}
 With the monotonicity of ${\cal W}_\rho$ in $\rho$, we deduce that $$\forall\rho\in [1,+\infty),\;\lim_{n\to\infty}{\cal W}_\rho({\cal L}((X^{n,x}_s)_{s\in[0,t]}),P^{n,x}_{\le t})=0.$$ Since the sequence $({\cal L}((X^{n,x}_s)_{s\in[0,t]}))_{n\in\N^*}$ is relatively compact in ${\cal P}_\rho(C([0,t],\R^d))$, this implies that so is $(P^{n,x}_{\le t})_{n\in\N^*}$. Moreover, to prove that the limit of any converging subsequence of $(P^{n,x}_{\le t})_{n\in\N^*}$ solves the martingale problem $(MP^x_{\le t})$, it is enough to check the same for any converging subsequence of $({\cal L}((X^{n,x}_s)_{s\in[0,t]}))_{n\in\N^*}$, which we now do.
 For $\varphi:\R^d\to\R$, ${\cal C}^\infty$ with compact support, $p\in\N^*$, $g:(\R^d)^p\to\R$ continuous and bounded, $0\le r_1\le r_2\le \cdots\le r_p\le r\le s\le t$, the function
 $$F(\xi)=\left(\varphi(\xi_s)-\varphi(\xi_r)-\frac 12\int_r^s{\rm tr}\left(a(\xi_u)\nabla^2\varphi(\xi_u)\right)du\right)g(\xi_{r_1},\cdots,\xi_{r_p})$$
 is continuous and bounded on $C([0,T],\R^d)$. Therefore it is enough to check that $\lim_{n\to\infty}\E[F(X^{n,x})]=0$. For $n\in\N^*$, defining $F_n$ like $F$ with the factor $a(\xi_u)$ replaced by $a(\xi_{\lfloor nu/t\rfloor t/n})$ in the trace, we have $\E[F_n(X^{n,x})]=0$.
 Since
 \begin{align*}
   |F_n(X^{n,x})-F(X^{n,x})|\le C\sup_{u\in[0,t]}\Big(&|a\nabla^2\varphi(X^{n,x}_u)-a\nabla^2\varphi(X^{n,x}_{\lfloor nu/t\rfloor t/n})|\\&+|a(X^{n,x}_{\lfloor nu/t\rfloor t/n})||\nabla^2\varphi(X^{n,x}_u)-\nabla^2\varphi(X^{n,x}_{\lfloor nu/t\rfloor t/n})|\Big),
 \end{align*}
 by the Cauchy--Schwarz inequality, the growth assumption on $a$ and \eqref{estimom},
 \begin{align*}
   &|\E[F(X^{n,x})]|=|\E[F(X^{n,x})-F_n(X^{n,x})]|\le \E[|F_n(X^{n,x})-F(X^{n,x})|]\\&\le C\E\left[\sup_{u\in[0,t]}|a\nabla^2\varphi(X^{n,x}_u)-a\nabla^2\varphi(X^{n,x}_{\lfloor nu/t\rfloor t/n})|\right]+C\E^{1/2}\left[\sup_{u\in[0,t]}|\nabla^2\varphi(X^{n,x}_u)-\nabla^2\varphi(X^{n,x}_{\lfloor nu/t\rfloor t/n})|^2\right].
 \end{align*}
 The right-hand side converges to $0$ as $n\to\infty$ since the compactly supported continuous functions $\nabla^2\varphi$ and  $a\nabla^2\varphi$ are uniformly continuous and bounded and, by \eqref{majosupac}, $\sup_{u\in[0,t]}|X^{n,x}_u-X^{n,x}_{\lfloor nu/t\rfloor t/n}|$ converges in probability to $0$. 
\hfill$\Box$
\bibliographystyle{plain}\bibliography{propconvex}\end{document}